\theoremstyle{plain}
\numberwithin{equation}{section}
\newtheorem{definition}{Definition}[section]
\newtheorem{theorem}[definition]{Theorem}
\newtheorem*{theorem*}{Theorem}
\newtheorem{remark}[definition]{Remark}
\newtheorem*{remark*}{Remark}
\newtheorem*{sideremark*}{Side Remark}
\newtheorem*{claim*}{Claim}
\newtheorem*{q*}{Question}
\newtheorem{lemma}[definition]{Lemma}
\newtheorem{corollary}[definition]{Corollary}
\newtheorem*{corollary*}{Corollary}
\newtheorem{proposition}[definition]{Proposition}
\newcommand{\R}{\mathbb{R}}
\newcommand{\na}{\nabla}
\newcommand{\emb}{\hookrightarrow}
\newcommand{\id}{{\rm Id}}
\newcommand{\p}{\partial}
\newcommand{\e}{\varepsilon}
\newcommand{\dd}{{\rm d}}
\newcommand{\T}{{\mathcal{T}}}
\def\XXint#1#2#3{{\setbox0=\hbox{$#1{#2#3}{\int}$ }
\vcenter{\hbox{$#2#3$ }}\kern-.6\wd0}}
\def\XXint#1#2#3{{\setbox0=\hbox{$#1{#2#3}{\int}$ }
\vcenter{\hbox{$#2#3$ }}\kern-.6\wd0}}
\title{Spacetime decay of mild solutions and conditional quantitative transfer of regularity of the incompressible Navier--Stokes Equations from $\mathbb{R}^n$ to bounded domains}
\author{Siran Li}
\address{Siran Li: School of Mathematical Sciences $\&$ CMA-Shanghai, Shanghai Jiao Tong University, No.~6 Science Buildings,
800 Dongchuan Road, Minhang District, Shanghai, China (200240)}
\email{\texttt{siran.li@sjtu.edu.cn}}
\author{Xiangxiang Su}
\address{Xiangxiang Su: School of Mathematical Sciences, Shanghai Jiao Tong University, No.~6 Science Buildings,
800 Dongchuan Road, Minhang District, Shanghai, China (200240)}
\email{\texttt{sjtusxx@sjtu.edu.cn}}
\keywords{Navier–-Stokes Equations; transfer of regularity; space-time decay; mild solution.}
\subjclass[2020]{35D35; 35Q30; 76D03; 76D05}
\date{\today}
\begin{document}

\begin{abstract}
This paper is motivated by the "transfer of regularity" phenomenon for the incompressible Navier--Stokes Equations (NSE) in dimension $n \geq 3$; that is, the strong solutions of NSE on $\R^n$ can be nicely approximated by those on sufficiently large domains $\Omega \subset \R^n$ under the no-slip boundary condition. Based on the spacetime decay estimates of mild solutions of NSE established by Miyakawa~\cite{MR1815476}, Schonbek~\cite{MR0775190} and others, we obtain quantitative estimates on higher-order derivatives of velocity and pressure for the incompressible Navier--Stokes flow on large domains under certain additional smallness assumptions of the Stokes' system and/or the initial velocity, thus complementing the results obtained by Robinson \cite{MR4321417} and O\.z\'anski \cite{MR4316125}. 
\end{abstract}
\maketitle

\section{Introduction}\label{sec: intro}
The well-posedness of the incompressible Navier--Stokes equations (NSE) is a long-standing central problem in mathematical hydrodynamics and Partial Differential Equations (PDE):
\begin{subequations}  \label{2024.10.31.1}
\begin{align}
{\partial }_{t}\mathbf{v} - {\Delta \mathbf{v}} + \nu ( {\mathbf{v} \cdot \nabla } ) \mathbf{v} + \nabla p &= 0 &&\text { in } \Omega\times [0, T],\label{2024.12.03.1}\\
\operatorname{div}\mathbf{v} &= 0 &&\text { in } \Omega \times [0, T],\\
\mathbf{v}|_{t=0} &= \mathbf{v}_{0} && \text { in }  \Omega.
\end{align}
\end{subequations}
The vector field $\mathbf{v}$ and scalar field $p$ represent the velocity and pressure of the fluid, respectively, and $\nu >0$ is the viscosity coefficient. Throughout this note, $\Omega$ is a domain in $\R^n$; $n \geq 3$.

The theoretical literature on the well-posedness theory of NSE is abundant; we only mention here the recent survey \cite{MR3068540} by Tao. In this contribution, we focus on one problem on the analysis of NSE arising primarily from experimental investigations. As the basis for various physical and numerical experiments --- most notably, Schiller \emph{et al} \cite{phys} and Kerr \cite{MR3828692} on the trefoil configurations of vorticity --- 
for sufficiently localised initial velocity ${\bf v}_0$, the NSE solutions on $\Omega \Subset \R^{3}$ is expected to nicely approximate that on $\mathbb{R}^3$, provided that $\Omega$ is large enough.

This phenomenon was rigorously analysed by Robinson \cite{MR4321417} with $\Omega = \ell \mathbb{T}^3= [-\ell,\ell]^3$, and was referred to as the ``transfer of regularity'' from $\R^3$ to $\Omega$ therein. More precisely, \cite[Theorem~7.1]{MR4321417} establishes that if the strong solution to NSE on $\R^3$ exists up to time $T$, then for $\ell>0$ sufficiently large, the strong solution to NSE on $\Omega = \ell \mathbb{T}^3$ subject to the same initial datum also exists up to $T$, and their difference tends to zero in $\bigcap_{1 \leq r < \infty}L^r_tH^1_x$ as $\ell \nearrow \infty$.

The first quantitative version of Robinson's result was obtained by O\.za\'nski \cite{MR4316125}, in which Euclidean balls $\Omega = B_R \subset \R^3$ were considered in place of the tori $\ell \mathbb{T}^3$:
\begin{theorem}\label{thm: Ozanski}
    Let ${R}_{0} > 0$, $M \geq 1$, and $a \in \lbrack 0,3/2)$. Assume that $\mathbf{v}_{0} \in {C}_{0}^{5}\left( {B}_{{R}_{0}};\R^3\right)$ is divergence-free. Suppose that $(\mathbf{v},p)$ is a strong solution of \eqref{2024.10.31.1} in $\R^3 \times [0,T]$ such that
\begin{align*}
\|\mathbf{v} ( t) \|_{H^5} + \|\mathbf{v} ( t) \|_{W^{5,\infty }} \leq M
\end{align*}
for all $t \in [0,T]$. Then for every $R \geq {R}_{0} + 1$ such that
\begin{align*}
R \ge C(a,M,{\bf v}_0)\mathrm{e}^{C_0 M^4T/a},
\end{align*}
there exists a unique strong solution $(\mathbf{w},\pi)$ to the problem~\eqref{2024.10.31.1} posed on ${B}_{R}$ with $\mathbf{w}\big| _{\partial B_R} = 0$ and the same initial datum $\mathbf{w}|_{t=0} =\mathbf{v}_{0}$, where $C_0 > 1$ is a universal constant. Moreover,  
\begin{align*}
&\| \nabla ( \mathbf{v}- \mathbf{w} ) ( t) \|_{L^2 ( B_R) }\le C(a,M,\mathbf{v}_{0})\mathrm{e}^{C_0 M^4 t}R^{-a} ,\\
&\| \nabla (p-\pi)\|_{{L}^{q}\left( {0,t;{L}^{2}\left( {B}_{R}\right) }\right) }\le C(a,M,\mathbf{v}_{0}){t}^{\frac{1}{q}}\mathrm{e}^{C_0 M^4 t}R^{-a}
\end{align*}
for each $t \in [0,T]$ and $q \in ( 1,\infty)$.
\end{theorem}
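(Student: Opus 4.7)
The plan is to approximate the $\R^3$-solution $({\bf v},p)$ by a divergence-free extension compactly supported inside $B_R$, solve NSE on $B_R$ perturbatively, and close the loop via the spatial decay of $({\bf v},p)$ on the annulus $A_R:=B_R\setminus B_{R-1}$. First, I would fix a radial cut-off $\chi_R\in C_0^\infty(B_R)$ with $\chi_R\equiv 1$ on $B_{R-1}$ and $|\na^k\chi_R|\leq C_k$, and form
\[
\bar{\bf v}:=\chi_R{\bf v} - \mathcal{B}\bigl[\na\chi_R\cdot{\bf v}\bigr],
\]
where $\mathcal{B}$ is the Bogovskii operator on $A_R$, which enjoys $H^k$-estimates uniform in $R$ on annuli of fixed thickness. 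Then $\bar{\bf v}$ is divergence-free, vanishes on $\p B_R$, and coincides with ${\bf v}_0$ at $t=0$ since ${\rm supp}\,{\bf v}_0\subset B_{R_0}\subset B_{R-1}$.

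Local well-posedness of a strong solution $({\bf w},\pi)$ to~\eqref{2024.10.31.1} on $B_R$ with Dirichlet data is classical. Setting ${\bf u}:={\bf w}-\bar{\bf v}$, the difference solves an NSE-type system with zero initial and boundary data, driven by a forcing ${\bf F}$ supported in $A_R$ that collects derivatives of $\chi_R$ applied to $({\bf v},p)$ and the commutators arising from $\mathcal{B}$. The crucial input is the spatial decay of $({\bf v},p)$ on $A_R$: from the spacetime decay theory of Miyakawa, Schonbek and related sources referenced in the abstract, combined with the compact support of ${\bf v}_0$ and the $W^{5,\infty}$ bound, one deduces $\|{\bf F}(t)\|_{L^2(B_R)}\leq C(M,{\bf v}_0)\,R^{-a}$ for every $a<3/2$. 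The heart of the proof is then an $H^1$-energy estimate: testing the equation for ${\bf u}$ against ${\bf u}$ and $-\Delta{\bf u}$, using $\|\bar{\bf v}\|_{W^{1,\infty}}\leq CM$ to control $(\bar{\bf v}\cdot\na){\bf u}$ linearly and Gagliardo--Nirenberg plus Young to absorb $({\bf u}\cdot\na){\bf u}$, yields
\[
\tfrac{d}{dt}\|{\bf u}\|_{H^1}^2+\|\na{\bf u}\|_{H^1}^2 \leq C_0 M^4\|{\bf u}\|_{H^1}^2 + CR^{-2a}.
\]
Gronwall with ${\bf u}(0)=0$ delivers $\|\na{\bf u}(t)\|_{L^2(B_R)}\leq C(M,{\bf v}_0,a)\,R^{-a}e^{C_0M^4t}$, and the triangle inequality with $\|\na(\bar{\bf v}-{\bf v})\|_{L^2(A_R)}\lesssim R^{-a}$ gives the stated velocity estimate. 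The smallness of this a priori bound for $R\geq C(a,M,{\bf v}_0)\exp(C_0 M^4T/a)$ simultaneously rules out blow-up of ${\bf w}$ and permits continuation of $({\bf w},\pi)$ up to time $T$ by a standard bootstrap.

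For the pressure, I would apply $\operatorname{div}$ to the difference equation so that $\pi-p$ solves a Poisson-type problem on $B_R$ with source $\operatorname{div}[({\bf w}\cdot\na){\bf w}-({\bf v}\cdot\na){\bf v}]+\operatorname{div}{\bf F}$, and invoke Stokes/elliptic regularity on the ball to dominate $\|\na(\pi-p)(t)\|_{L^2(B_R)}$ by $\|\na{\bf u}(t)\|_{L^2}+CR^{-a}$; raising to the $q$-th power in $t$ and integrating over $[0,t]$ then produces the $t^{1/q}$ factor. The main obstacle is the propagation, up to time $T$ and with explicit $M$-dependence, of sharp spatial decay for $({\bf v},p)$ and their derivatives on the moving annulus $A_R$ -- this is precisely where the Miyakawa--Schonbek mild-solution representation is essential, and the ceiling $a<3/2$ is dictated by the slowest of these decay rates (typically those of $p$ and $\na p$). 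The exponential-in-$T$ growth is the unavoidable cost of applying Gronwall with coefficient $C_0M^4$ coming from the $L^\infty$-bounds on $({\bf v},\na{\bf v})$.
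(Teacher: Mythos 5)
Your overall architecture (cut off ${\bf v}$ with a radial $\chi_R$, restore the divergence-free condition with a Bogovski\u{\i} corrector on the annulus, run an $H^1$-energy/Gronwall estimate on the remainder, and recover the pressure by elliptic regularity) is the same as O\.za\'nski's in \cite{MR4316125}, and that part of the plan is sound. The genuine gap is in the step you yourself call ``the crucial input'': you propose to obtain the spatial decay $\|{\bf F}(t)\|_{L^2(B_R)}\leq C(M,{\bf v}_0)R^{-a}$ on the annulus from the Miyakawa--Schonbek spacetime decay theory for mild solutions. That theory (see Proposition~\ref{thm 2.1} in this paper, quoting \cite{MR3288012,MR1815476}) is a \emph{small-data} result: the pointwise bounds $|D^k{\bf v}(x,t)|\leq C(1+|x|)^{-\alpha}(1+t)^{-\beta/2}$ are produced by a fixed-point iteration that closes only when the constant in the hypothesis on $e^{-tA}{\bf v}_0$ is small. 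Theorem~\ref{thm: Ozanski} assumes no smallness whatsoever --- only $\|{\bf v}(t)\|_{H^5}+\|{\bf v}(t)\|_{W^{5,\infty}}\leq M$ with $M\geq 1$ arbitrary --- so for large data there is no decaying mild solution to which your given strong solution can be matched, and the claimed bound on ${\bf F}$ has no justification. This is precisely the distinction the present paper is built around: Theorem~\ref{main thm} is called a \emph{conditional} transfer of regularity exactly because it imports the mild-solution decay hypothesis, and the authors explicitly warn that it does not recover Theorem~\ref{thm: Ozanski}.

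What O\.za\'nski actually does at this point is prove spatial decay directly for the (possibly large) strong solution: weighted Caffarelli--Kohn--Nirenberg estimates in the spirit of Kukavica--Torres \cite{MR1855669,kt1,kt2} give decay of weighted norms of the vorticity, and a Fourier-multiplier lemma transfers this to decay of $\nabla{\bf v}$ (and thence of $p$ via the Riesz-transform representation of the pressure), at the cost of the ``loss of derivatives'' from $H^5\cap W^{5,\infty}$ control down to $\dot W^{1,2}$ decay; the ceiling $a<3/2$ comes from these weighted velocity estimates, not from the pressure as you surmise. Unless you replace your appeal to mild-solution theory by an argument of this kind that works for arbitrary $M$, the proof does not go through.
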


This result was proved in \cite{MR4316125} by first truncating the NSE solution to ${B}_{R-1}$, then introducing (via Bogovskiĭ's Lemma~\ref{the Bogovskiĭ lemma}) a corrector to cope with the loss of divergence-free property of the truncated velocity, and finally constructing a second corrector in the annular region $B_R \setminus {B}_{R - 1}$ to recover the NSE in $B_R$. The most technical part of the proof, as commented at the beginning of \cite[p.98]{MR4316125}, is to obtain spatial decay estimates for the strong solution ${\bf v}$. To this end, O\.za\'nski utilised the Caffarelli--Kohn--Nirenberg inequalities (motivated by Kukavica and Torres \cite{MR1855669, kt1, kt2}), together with the Fourier method which enables the passage from the decay of $\operatorname{curl} \bf v$ to that of $\bf v$ \cite[Lemma~4]{MR4316125}. This inevitably leads to the issue of ``loss of derivatives'', namely that one starts with the boundedness assumption for the $H^5 \cap W^{5,\infty}$-norm of ${\bf v}$ but ends up with the decay estimates for the $\dot{W}^{1,2}$-norm of ${\bf v}$. Nevertheless, as strong solutions of  NSE automatically lie in $C_t (H^5 \cap W^{5,\infty})_x$, the aforementioned result in \cite{MR4316125} imposes no extra conditions on ${\bf v}$.

Combining  O\.za\'nski's arguments 
\cite{MR4316125} outlined above with the theory of spacetime decay estimates for \emph{mild solutions} of NSE developed by Schonbek \cite{MR0775190}, Takahashi \cite{MR1692815}, Miyakawa \cite{MR1815476} and others, we establish our main result,  Theorem~\ref{main thm}. A primary motivation of this result is to provide additional quantitative information on the transfer of regularity results in \cite{MR4321417, MR4316125}, but under additional smallness/decay assumptions on the linearised system or the initial datum (hence the use of ``\emph{conditional} transfer of regularity'' in the title). To formulate it, we shall need to consider the  \emph{Stokes system}, \emph{i.e.}, the PDE obtained by dropping the nonlinear convection term in NSE, subject to the same initial datum:
\begin{subequations}  \label{Stokes problem}
\begin{align}
{\partial }_{t}{\bf v}_{\rm St} - {\Delta{\bf v}_{\rm St}} + \nabla {p}_{\rm St} &= 0 &&\text { in } \R^n\times [0, T],\\
\operatorname{div}{\bf v}_{\rm St} &= 0 &&\text { in } \R^n \times [0, T],\\
{\bf v}_{\rm St}\big|_{t=0} &= \mathbf{v}_{0} && \text { at }  \R^n \times \{0\}.
\end{align}
\end{subequations}

Here and hereafter, $\mathbf{v}_{0} \in {C}_{0}^{k_0} ({B}_{R_0}; \R^{n})$ is identified with its extension-by-zero on $\R^n$ without further explanation. We fix $$\nu=1$$ in NSE~\eqref{2024.10.31.1} and assume that the dimension $n \geq 3$. A (Leray--Hopf) \emph{weak solution} ${\bf v}$ is in the regularity class $L^\infty\big(0,T; L^2(\Omega;\R^n)\big) \cap L^2\big(0,T; H^1(\Omega;\R^n)\big)$, and a \emph{strong solution} is in $L^\infty\big(0,T; H^1(\Omega;\R^n)\big) \cap L^2\big(0,T; H^2(\Omega;\R^n)\big)$. For $\Omega = \R^n$, the strong solutions are indeed smooth. See Leray's classical work~\cite{leray} and the modern exposition~\cite{op}.

\begin{theorem} \label{main thm}
Given $T,R_0>0$, $q \in (1,\infty)$, and $k_0 \in \mathbb{N}$ such that $q\cdot k_0>n$. Let $\mathbf{v}_{0} \in {C}_{0}^{k_0} ({B}_{R_0}; \R^{n})$ be a divergence-free vector field such that the solution ${\bf v}_{\rm St}$ of the Stokes system~\eqref{Stokes problem} satisfies
\begin{align}\label{2025.1.14.1}
&\left\|D^k{\bf v}_{\rm St}(\cdot,t) \right\|_{L^q( \R^n \smallsetminus B_{R_0})} \le C (R_0)^{-\alpha\left(n+1+k-\frac{n-1}{q}\right)}t^{-\beta\left(n+1+k- \frac{n}{q}\right) /2}\nonumber\\
&\qquad \text{for any $t \in [0,T]$, $k \in \{0,1,\ldots,k_0\}$, and } \alpha,\beta\geq 0 \text{ with } \alpha + \beta = 1.
\end{align}
There exists a constant $C_0>0$ such that the following holds whenever $C\leq C_0$ in \eqref{2025.1.14.1}:
\begin{itemize}
\item 
There exists a unique strong solution $(\mathbf{v},p)$ of NSE~\eqref{2024.10.31.1} in $\R^n \times [0,T]$. 
    \item 
There exists a unique strong solution $( \mathbf{w},\pi )$ to NSE~\eqref{2024.10.31.1} in $B_R \times [0,T]$ with the no-slip boundary condition $\mathbf{w}| _{\partial B_R} = 0$ and initial datum $\mathbf{w}|_{t=0} =\mathbf{v}_{0}$. Here $R \geq R_0+1$ is arbitrary. 
\item 
There is a constant  $C_1 >0$ such that 
\begin{eqnarray}\label{2024.10.31.2}
\begin{split}
\left\| D^k ( \mathbf{v}- \mathbf{w} ) (\cdot, t) \right\|_{L^q ( B_R) }\le C_1 R^{-\alpha\left(n-1+k-\frac{n}{q}\right)}t^{-\beta\left(n-1-\varepsilon+k- \frac{n}{q}\right) /2}\\
\text{ and }\quad \left\| D^k (p-\pi) (\cdot, t)\right\|_{L^q ( B_R) }\leq C_1 R^{-\alpha\left(n+k-\frac{n-1}{q}\right)}t^{-\beta\left(n+k- \frac{n}{q}\right)/2}
\end{split}
\end{eqnarray}
for any $k \in \{0,1,\ldots,k_0\}$ and $\alpha,\beta \geq 0$ with $\alpha + \beta = 1$.
\end{itemize}
In the above, $C_0$ and $C_1$ depend only on $T$, $R_0$, $q$, $n$, $k$, and the initial velocity ${\bf v}_0$. 
\end{theorem}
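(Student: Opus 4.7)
The plan is to follow O\.za\'nski's three-step comparison argument \cite{MR4316125} but, crucially, to drive it with decay estimates on the \emph{full} nonlinear NSE solution $\mathbf{v}$ — and on its pressure — that are extracted from the hypothesis \eqref{2025.1.14.1} on the linearised Stokes flow via the mild-solution theory of Miyakawa~\cite{MR1815476}, Schonbek~\cite{MR0775190}, and Takahashi~\cite{MR1692815}.

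First I would recast NSE on $\R^n$ in mild/Duhamel form
\[
\mathbf{v}(t) = \mathbf{v}_{\rm St}(t) - \int_0^t e^{(t-s)\Delta}\,\mathbb{P}\,\mathrm{div}(\mathbf{v}\otimes\mathbf{v})(s)\,\mathrm{d}s,
\]
where $\mathbb{P}$ is the Leray projector, and introduce a Banach space $X$ of divergence-free vector fields on $\R^n\times[0,T]$ whose norm encodes exactly the decay pattern in \eqref{2025.1.14.1}, uniformly over all $k\in\{0,\ldots,k_0\}$ and all admissible pairs $(\alpha,\beta)$ with $\alpha+\beta=1$. The smallness hypothesis $C\leq C_0$ puts $\mathbf{v}_{\rm St}$ in a small ball of $X$. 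Using standard $L^q$--$L^q$ heat-kernel estimates together with the Calder\'on--Zygmund calculus of $\mathbb{P}$, the Duhamel map is a strict contraction on such a ball (the nonlinear term being quadratic in the $X$-norm). The unique fixed point is the unique strong solution $\mathbf{v}$ on $\R^n\times[0,T]$, and the resulting $X$-bound yields precisely the required decay of $D^k\mathbf{v}$ outside $B_R$. From the Poisson identity $-\Delta p=\partial_i\partial_j(\mathbf{v}^i\mathbf{v}^j)$ and the Newtonian kernel, one then reads off $D^k p$-decay with the shifted exponent $R^{-\alpha(n+k-(n-1)/q)}t^{-\beta(n+k-n/q)/2}$.

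Second, I would construct $(\mathbf{w},\pi)$ on $B_R$ following \cite{MR4316125}: cut off $\mathbf{v}$ by a smooth radial bump supported in $B_{R-1}$, restore the divergence-free property through Bogovski\u{\i}'s lemma on the annulus $B_R\smallsetminus B_{R-1}$, and solve the resulting perturbed NSE in $B_R$ with homogeneous Dirichlet data by standard Galerkin/energy arguments. The decay bounds produced in the previous step, read off on the annulus, make both the Bogovski\u{\i} corrector and the residual forcing small enough (uniformly in $R$) that the strong solution exists on $[0,T]$ and is unique. Finally, setting $\mathbf{u}:=\mathbf{v}-\mathbf{w}$, I would write down the PDE satisfied by $\mathbf{u}$ (NSE with annulus-supported forcing and a non-trivial trace from $\mathbf{v}|_{\partial B_R}$) and carry out $L^q$ energy estimates on each $D^k\mathbf{u}$ followed by Gr\"onwall, with the forcing and boundary contributions controlled by the decay from the first step; the small $\varepsilon$-loss in the velocity time exponent arises from interpolation used to close the commutator terms produced by the derivatives $D^k$.

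The hard part is the contraction in the first step: the weighted norm on $X$ must simultaneously track the spatial decay outside $B_{R_0}$, the temporal singularity as $t\to 0^+$, and the different $k$-th-order derivative scalings for every convex combination $(\alpha,\beta)$, while remaining stable under the quadratic nonlinearity $\mathbb{P}\,\mathrm{div}(\mathbf{v}\otimes\mathbf{v})$. The arithmetic of this stability analysis dictates the precise matching of exponents between \eqref{2025.1.14.1} and \eqref{2024.10.31.2}, and the smallness $C\leq C_0$ is exactly what the fixed-point scheme demands in order for the nonlinear feedback not to destroy the decay inherited from the Stokes profile.
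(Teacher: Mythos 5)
Your first two stages track the paper closely: the paper also passes from \eqref{2025.1.14.1} to spacetime pointwise decay of $D^k\mathbf{v}$ via the Miyakawa--Schonbek mild-solution fixed point (Proposition~\ref{thm 2.1}), reads off the pressure decay from the Newtonian potential for $-\Delta p=\operatorname{div}\operatorname{div}(\mathbf{v}\otimes\mathbf{v})$, and then performs the Ożański cutoff-plus-Bogovski\u{\i} construction. The genuine gap is in your final stage. You propose to prove \eqref{2024.10.31.2} by writing the PDE for $\mathbf{u}=\mathbf{v}-\mathbf{w}$ and running $L^q$ energy estimates on $D^k\mathbf{u}$ followed by Gr\"onwall. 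A Gr\"onwall argument produces a factor $e^{Ct}$ (this is exactly the $e^{C_0M^4t}$ in Ożański's Theorem~\ref{thm: Ozanski}) and therefore cannot yield the temporal \emph{decay} $t^{-\beta(n-1-\varepsilon+k-n/q)/2}$ demanded by \eqref{2024.10.31.2}; the time-decay half of the conclusion is simply out of reach of that method. There is also a structural obstruction: $\mathbf{v}-\mathbf{w}$ does not vanish on $\partial B_R$, so $L^q$ energy identities on $B_R$ for $D^k\mathbf{u}$ with $k$ up to $k_0$ generate boundary terms that you have no mechanism to absorb.

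The paper avoids both problems by never estimating the difference through its own PDE. Instead it writes $\mathbf{w}=\phi\mathbf{v}+\mathbf{v}_c+\tilde{\mathbf{v}}$ and constructs the second corrector $\tilde{\mathbf{v}}$ (solving \eqref{2024.11.2.1}, with forcing supported in the annulus $B_R\smallsetminus B_{R-1}$) by a \emph{second} fixed-point iteration in a spacetime-weighted space $\mathcal{X}$ on $B_R$, whose norm simultaneously encodes the $R^{-(n-1+m-n/q)}$ and $t^{-(n-1-\varepsilon+m-n/q)/2}$ scalings; the contraction is driven by pointwise kernel bounds for $e^{-tA}\mathbb{P}$ and $\nabla e^{-tA}\mathbb{P}$ (Lemmas~\ref{lemma 4.1}--\ref{lemma 4.2}), and this is also where the $\varepsilon$-loss actually originates (integrability of $(t-s)^{-(1+\varepsilon)/2}$ in the Duhamel integral), not from commutators. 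The final bound \eqref{2024.10.31.2} then follows from the triangle inequality applied to $\mathbf{v}-\mathbf{w}=(1-\phi)\mathbf{v}-\mathbf{v}_c-\tilde{\mathbf{v}}$, each summand being supported in the annulus and carrying the desired spacetime decay. To repair your proposal you would need to replace the energy/Gr\"onwall step by such a weighted semigroup estimate for the corrector equation; as written, the argument proves at best the spatial half of \eqref{2024.10.31.2} with an extraneous $e^{Ct}$ factor.
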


\begin{remark}
The assumption $q \cdot k_0 >n$ is only used towards the end of the proof (see \S\ref{sec: proof}), which ensures the validity of the Sobolev--Morrey embedding $W^{k_0, q}(\R^n)\emb L^\infty(\R^n)$, thus allowing us to apply a fixed point  argument to construct solutions to the nonlinear NSE. 
\end{remark}

Our Main Theorem~\ref{main thm} complements Theorem~\ref{thm: Ozanski} in several aspects below:
\begin{itemize}
    \item 
Instead of imposing decay assumptions on the NSE solution ${\bf v}(t,x)$, we only impose the decay assumption~\eqref{2025.1.14.1} on solutions to the Stokes problem (which is linear!) with the same initial datum ${\bf v}_0 \in C^{k_0}_c(\R^n;\R^n)$. 
\item 
We obtain both spatial and temporal decay estimates of higher order for the difference between the NSE solutions on $\R^n$ and $B_R \subset \R^n$. 
\item 
We work with arbitrary dimensions $n \geq 3$. 
\end{itemize}

\begin{remark}
It should be emphasised that our Main Theorem~\ref{main thm} does \emph{not} serve as an improvement of Theorem~\ref{thm: Ozanski}, for the key point of the ``transfer of regularity'' results is to impose no additional decay or smallness assumptions on NSE/Stokes solutions or the initial data. Our  purpose is to showcase that the theory of spacetime decay for NSE mild solutions may shed new lights on the ``transfer of regularity'' or other related phenomena in mathematical hydrodynamics. 
\end{remark}


In Theorem~\ref{thm: Ozanski}, the spatial decay rate of the $L^2$-norm of the velocity gradient is proved by using Caffarelli–Kohn–Nirenberg inequalities and Fourier analysis. Our proof of Theorem~\ref{main thm} adopts a different approach: we resort to the theory of spacetime decay for mild solutions of NSE, developed by Miyakawa \cite{MR1815476}, Schonbek \cite{MR0775190} and others.


Let us comment on the condition~\eqref{2025.1.14.1}. Indeed, it is known that for any solenoidal, compactly supported vector field ${\bf v}_0 \in C^{k_0}_c(\R^n;\R^n)$, one has the pointwise estimate (\emph{cf}. Miyakawa \cite{MR1815476} or Schonbek \cite[Theorem~3.2]{MR3288012}): $$
\left| D^k \mathbf{v}_{\rm St} (x,t) \right| \leq C (1+|x|)^{-a} (1 + t)^{-b /2} \qquad \text{ for all } a, b\geq 0 \text{  with } a + b = n + 1 + k,$$ from which the $L^q$-estimate~\eqref{2025.1.14.1} follows directly. Moreover, by linearity of the Stokes system~\eqref{Stokes problem}, if ${\bf v}_{\rm St}(x,t)$ is a solution with the initial datum ${\bf v}_0$, then $\lambda {\bf v}_{\rm St}(x,t)$ is a solution with the initial datum $\lambda {\bf v}_0$ with any $\lambda>0$.

The observations above lead to the following ``transfer of regularity'' result, only assuming the smallness of the compactly supported initial velocity:
\begin{corollary}\label{cor}
Given $T,R_0>0$, $q \in (1,\infty)$, and $k_0 \in \mathbb{N}$ such that $q\cdot k_0>n$. Let $\mathbf{v}_{0} \in {C}_{0}^{k_0} ({B}_{R_0}; \R^{n})$ be a divergence-free vector field. There exists a constant $\e_0>0$ such that if $$\|{\bf v}_0\|_{C^{k_0}} \leq \e_0,$$ then the following holds:
\begin{itemize}
\item 
There exists a unique strong solution $(\mathbf{v},p)$ of NSE~\eqref{2024.10.31.1} in $\R^n \times [0,T]$. 
    \item 
There exists a unique strong solution $( \mathbf{w},\pi )$ to NSE~\eqref{2024.10.31.1} in $B_R \times [0,T]$ with the no-slip boundary condition $ \mathbf{w}| _{\partial B_R} = 0$ and initial datum $\mathbf{w}|_{t=0} =\mathbf{v}_{0}$. Here $R \geq R_0+1$ is arbitrary. 
\item 
There is a constant  $C_1>0$ such that 
\begin{eqnarray}
\begin{split}
\left\| D^k ( \mathbf{v}- \mathbf{w} ) (\cdot, t) \right\|_{L^q ( B_R) }\le C_1 R^{-\alpha\left(n-1+k-\frac{n}{q}\right)} t^{-\beta \left(n-1-\varepsilon+k-\frac{n}{q}\right) /2}\\
\text{ and }\quad \left\| D^k (p-\pi) (\cdot, t)\right\|_{L^q ( B_R) }\leq C_1 R^{-\alpha\left(n+k-\frac{n-1}{q}\right)}t^{-\beta\left(n+k- \frac{n}{q}\right)/2}
\end{split}
\end{eqnarray}
for any $k \in \{0,1,\ldots,k_0\}$ and $\alpha,\beta \geq 0$ with $\alpha + \beta = 1$.
\end{itemize}
Here $\e_0$ depends only on $T$, $R_0$, $q$, and $k_0$, while $C_1$ depends only on $T$, $R_0$, $q$, $n$, $k$, and $\e_0$. 
\end{corollary}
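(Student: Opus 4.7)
The strategy is to reduce Corollary~\ref{cor} to Theorem~\ref{main thm} by verifying that the decay hypothesis~\eqref{2025.1.14.1} holds with a sufficiently small constant whenever $\|\mathbf{v}_0\|_{C^{k_0}} \leq \varepsilon_0$. Since the two statements have essentially identical conclusions, once \eqref{2025.1.14.1} is secured with constant at most the threshold $C_0$ from Theorem~\ref{main thm}, everything else---existence and uniqueness of $(\mathbf{v},p)$ and $(\mathbf{w},\pi)$, and the quantitative comparison estimates---follows immediately from that theorem.

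First, I would invoke the pointwise spacetime decay estimate of Miyakawa~\cite{MR1815476} and Schonbek recalled in the paragraph before the corollary: for $\mathbf{v}_0 \in C_0^{k_0}(B_{R_0};\R^n)$ divergence-free, the associated Stokes solution satisfies
\begin{equation*}
|D^k \mathbf{v}_{\rm St}(x,t)| \leq C_*(\mathbf{v}_0)\,(1+|x|)^{-a}(1+t)^{-b/2}
\end{equation*}
for all $a,b \geq 0$ with $a+b = n+1+k$ and every $k \in \{0,1,\ldots,k_0\}$. Taking $L^q$-norms over $\R^n \setminus B_{R_0}$ and splitting the exponents via $\alpha+\beta=1$ (exactly as in the derivation alluded to between Theorem~\ref{main thm} and Corollary~\ref{cor}) then yields \eqref{2025.1.14.1} with constant proportional to $C_*(\mathbf{v}_0)$.

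Second, I would track how $C_*(\mathbf{v}_0)$ depends on the initial datum. By linearity of the Stokes system~\eqref{Stokes problem}, the solution map $\mathbf{v}_0 \mapsto \mathbf{v}_{\rm St}$ is linear, and hence $C_*$ may be chosen to be positively homogeneous of degree one: $C_*(\lambda\mathbf{v}_0) = \lambda\, C_*(\mathbf{v}_0)$ for every $\lambda \geq 0$. An inspection of the Miyakawa--Schonbek construction (which expresses $C_*(\mathbf{v}_0)$ in terms of a finite collection of weighted $L^1$-moments of $\mathbf{v}_0$) combined with the support condition $\mathrm{supp}\,\mathbf{v}_0 \subset B_{R_0}$ then yields $C_*(\mathbf{v}_0) \leq K\, \|\mathbf{v}_0\|_{C^{k_0}}$ for some $K = K(T,R_0,q,n,k_0) > 0$, since each such weighted moment is controlled by a power of $R_0$ times the $C^{k_0}$-norm.

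Finally, I would set $\varepsilon_0 := C_0/K$ (with $C_0$ the smallness constant furnished by Theorem~\ref{main thm}), so that the assumption $\|\mathbf{v}_0\|_{C^{k_0}} \leq \varepsilon_0$ automatically forces the constant in \eqref{2025.1.14.1} to be at most $C_0$. Applying Theorem~\ref{main thm} then concludes the proof, with the constant $C_1$ of the corollary inheriting the dependencies on $T$, $R_0$, $q$, $n$, $k$, and (through $\varepsilon_0$) on the latter. The main subtlety I expect is the linearity step above: verifying that $C_*(\mathbf{v}_0)$ depends \emph{linearly} on $\|\mathbf{v}_0\|_{C^{k_0}}$, rather than on some higher weighted norm incompatible with the $C^{k_0}$-topology, requires opening up the Miyakawa--Schonbek estimate rather than treating it purely as a black box.
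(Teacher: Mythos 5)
Your proposal is correct and follows essentially the same route as the paper: the paper's own proof likewise verifies the Stokes decay hypothesis~\eqref{2025.1.14.1} via the Miyakawa--Schonbek pointwise estimates together with the linearity/scaling of the Stokes system, and then concludes by invoking Theorem~\ref{main thm}. Your write-up merely makes explicit the homogeneity of the constant $C_*(\mathbf{v}_0)$ and the choice $\varepsilon_0 = C_0/K$, which the paper leaves implicit.
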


\begin{proof}[Proof of Corollary~\ref{cor}]

Under the smallness assumption for the initial velocity, existence of the unique strong solution up to a finite time $T>0$ is well-known (with $\e_0$ depending on $T$), and the decay estimate~\eqref{2025.1.14.1} on the linear Stokes system, \emph{i.e.}, the assumption in Theorem~\ref{main thm}, follows from \cite{MR0775190, MR1692815, MR1815476}. Hence we may conclude  from Theorem~\ref{main thm}.   \end{proof}

\begin{remark}
Analogous results to Theorem~\ref{main thm} and Corollary~\ref{cor} for NSE on a bounded smooth domain $\Omega \Subset \R^n$ hold too: we may simply take $R_0$ so large that $\Omega \Subset B_{R_0}$.  On the other hand, we expect analogous results to Theorem~\ref{main thm} and Corollary~\ref{cor} also hold in dimension $n=2$: one may need to utilise instead the 2D spacetime decay estimates for mild solutions of the NSE and the Stokes system. We leave it for future investigations.
\end{remark}

To put it in perspectives, let us remark that various investigations have been carried out in the literature under the theme of ``transfer of regularity''. Constantin (1986) proved in the seminal work \cite{MR0836008} that, on $\mathbb{R}^3$ or $\mathbb{T}^3$, if the smooth solution to the Euler equations (\emph{i.e.}, the PDE obtained by formally taking the viscosity coefficient as zero in NSE) exists up to time $T$, then NSE with the same initial datum and sufficiently small viscosity $\nu$ also admits a unique smooth solution by time $T$. For $\Omega = \ell \mathbb{T}^3$, there exists a critical viscosity $\nu_s \sim \ell^{-3}$ such that when $\nu < \nu_s$, the higher-order norms of the NSE solution are controlled by those of the Euler solution. Heywood \cite{h} showed that a Leray--Hopf weak solution of NSE on $\R^3$ can be realised as a limit of weak solutions on $B_R$. See also the subsequent developments by Kelliher \cite{k}, Chernyshenko--Constantin--Robinson--Titi \cite{ccrt}, and Enciso,  Garc\'ia-Ferrero, and D. Peralta-Salas \cite{MR3934592, MR4315668}.

The remaining parts of this note are organised as follows. Section~\ref{sec: PDE} introduces the PDE systems (the same as in \cite{MR4316125}) that shall be used throughout. Section~\ref{sec: prelim} sets up the framework of mild solutions and derives preliminary estimates for $({\bf v}, p)$. Next, the estimates for the truncated velocity and the first corrector are given in Section~\ref{sec: est}. An iterative estimate is proved in Section~\ref{sec: iteration} to bound the second corrector function, which will be used in the fixed point argument leading to the existence of solutions in later parts. Finally, we conclude the proof of Theorem~\ref{main thm} in Section~\ref{sec: proof}. The Bogovskiĭ Lemma and relevant estimates for $e^{-tA}\mathbb{P}$ are presented in the Appendix.

Before further developments, we comment on the notation of norms throughout this note. For vector-valued function $\mathbf{v} = (v_1, v_2, \dots, v_n)$,  $1 \le q \le \infty$, and $|I| \in \mathbb{N}$, we write $$\left\|D^{I} \mathbf{v}\right\|_{L^q} = \sup_{j,I} \left\|D^{I} v_j\right\|_{L^q} =\mathop{\sup }\limits_{\substack{j},I} \left\|\partial_{x_1}^{i_1} \partial_{x_2}^{i_2} \cdots \partial_{x_n}^{i_n} v_j\right\|_{L^q}.$$ The supremum is taken over all multi-indices $I = (i_1, i_2, \ldots, i_n) \in \mathbb{N}^n$ with $\Sigma_{j=1}^n i_j=|I|$ and all $j \in \{1,2,\ldots,n\}$. Also, for a function $f = f(x,t)$ and $1<p,q<\infty$, we write $\|f\|_{L^p(0,T; L^q(\Omega))}:= \left\{\int_0^T\left(\int_\Omega |f(x,t)|^q\,\dd x\right)^{\frac{p}{q}}\,\dd t\right\}^{\frac{1}{p}}$ and $\|f(t)\|_{L^q(\Omega)} := \left\{\int_\Omega |f(x,t)|^q\,\dd x\right\}^{\frac{1}{q}}$. Moreover,
\begin{remark}\label{remark on const}
Throughout this note, unless otherwise specified, all the constants $C, C_1, C_2,\ldots$ depend only on $q$, $k$, $R_0$, and the dimension $n$, which may vary from line to line.
\end{remark}

\section{Derivation of the PDE}\label{sec: PDE}
Recall the Navier--Stokes Equations (NSE)~\eqref{2024.10.31.1}, in which we take $\nu=1$ once and for all:
\begin{equation*}
    {\partial }_{t}\mathbf{v} - {\Delta \mathbf{v}} + ( {\mathbf{v} \cdot \nabla } ) \mathbf{v} + \nabla p = 0\quad\text{and}\quad {\rm div}\,{\bf v}=0\qquad\text{in } \R^n \times [0,T]
\end{equation*}
with the initial datum $\mathbf{v}|_{t=0} = \mathbf{v}_{0}$ and dimension $n \geq 3$. In this note, we take a smooth bounded domain $\Omega \subset \R^{n}$ and assume that $\Omega \subset B_{R_0}$ for some $R_0 >0$. In what follows $R\geq R_0+1$.

Consider a spatial cutoff function: $\phi(x)=\tilde{\phi}(|x|)$ for some $\tilde{\phi} \in C^\infty(\R_+)$ such that $\phi=1$ within ${B}_{R - 1}$ and $\phi = 0$ outside $B_R$. We have the following PDE for the cutoff velocity $\phi \mathbf{v}$:
\begin{subequations}\label{eq for phi v}
\begin{align}
{\partial }_{t} ( \phi \mathbf{v} ) - \Delta ( \phi \mathbf{v} ) + \left( (\phi \mathbf{v}) \cdot \nabla \right) ( \phi \mathbf{v}) + \nabla ( \phi p )&= {F}_{1} + p \nabla \phi \;&&\text{ in }B_R \times [0,T], \\
\operatorname{div}( \phi \mathbf{v}) &= \nabla \phi \cdot \mathbf{v}  \;&&\text{ in }B_R \times (0,\infty),\\
\phi \mathbf{v} &= 0 \;  &&\text { on } \partial B_R \times [0,T],\\
( \phi \mathbf{v}) | _{t=0} &= \mathbf{v}_{0} \;&&\text{ at }B_R \times \{0\},
\end{align}
\end{subequations}
where
\begin{align} \label{2024.11.18.5}
{F}_{1} := - \phi ( 1 - \phi ) ( {\mathbf{v} \cdot \nabla }) \mathbf{v} + ( \phi \mathbf{v} \cdot \nabla \phi ) \mathbf{v} - \mathbf{v}\Delta \phi  - 2\nabla \mathbf{v} \cdot \nabla \phi .
\end{align}

The divergence-free property is not preserved under spatial cutoff. Thus, let us introduce a correction function $\mathbf{v}_{c}$ such that
\begin{align} \label{2024.11.11}
\mathbf{r} := \phi \mathbf{v} + \mathbf{v}_{c}
\end{align}
is divergence free and has zero trace on $\partial B_R$. 
Thus, $\mathbf{v}_{c}$ satisfy the PDE:
\begin{subequations} \label{2025.1.7.1}
\begin{align}
 {\partial }_{t} \mathbf{v}_{c}- \Delta \mathbf{v}_{c} + ( \mathbf{v}_{c} \cdot \nabla ) \mathbf{v}_{c} + ( \mathbf{v}_{c} \cdot \nabla ) ( \phi \mathbf{v}) + ( \phi \mathbf{v} \cdot \nabla ) \mathbf{v}_{c} &:= {F}_{2} \;&&\text{ in }B_R \times [0,T],\\
\operatorname{div} \mathbf{v}_{c}&=-\operatorname{div}( \phi \mathbf{v}) \;&&\text{ in }B_R \times [0,T].
\end{align}
\end{subequations}

To construct $\mathbf{w}$ and $\pi$ that satisfy NSE~\eqref{2024.10.31.1} as stated in Theorem~\ref{main thm}, we introduce further corrector functions $\tilde{\mathbf{v}}$ and $\bar{p}$ so that 
\begin{align} \label{2024.11.19.1}
\mathbf{w} = \mathbf{r} + \tilde{\mathbf{v}} \qquad \text{ and } \qquad \pi = \phi p + \bar{p}.
\end{align}
The PDE system for $\left(\tilde{\mathbf{v}}, \bar{p}\right)$ is as follows:
\begin{subequations} \label{2024.11.2.1}
\begin{align}
{\partial }_{t}\tilde{\mathbf{v}} - \Delta \tilde{\mathbf{v}} + ( \tilde{\mathbf{v}} \cdot \nabla ) \tilde{\mathbf{v}} + \nabla \bar{p } &= -F -p \nabla \phi- ( \tilde{\mathbf{v}} \cdot \nabla ) \mathbf{r} - ( \mathbf{r} \cdot \nabla ) \tilde{\mathbf{v}}\;&&\text{ in }B_R \times [0,T],\label{2025.01.03.1}\\
\operatorname{div}\tilde{\mathbf{v}} &= 0\;&&\text{ in }B_R \times [0,T], \\
\tilde{\mathbf{v}} &= 0\;  &&\text { on } \partial B_R \times [0,T],\\
\tilde{\mathbf{v}} | _{t=0} &= \mathbf{v}_{0} \;&&\text{ at }B_R \times \{0\},
\end{align}
\end{subequations}
where $F := {F}_{1} + {F}_{2}$. We emphasise that $F$ is supported in the annulus $B_R\smallsetminus B_{R-1}$.

\section{Preliminaries}\label{sec: prelim}

The analytical framework for this paper is based on the theory of \emph{mild solutions} of NSE~\eqref{2024.10.31.1}, \emph{i.e.}, the representation of (in our case, strong) solutions of NSE via the Stokes semigroup. One first considers the linearised equation, namely that the Stokes system:
\begin{align} \label{2024.11.18.1}
{\partial }_{t}\mathbf{v} - {\Delta \mathbf{v}} + \nabla p =f \quad \text{and} \quad \operatorname{div} \mathbf{v}=0\qquad \text{in } \Omega \times [0,T].
\end{align}

\subsection{Mild solution}  \label{sec:Mild solution}
Denote by $\mathbb{P}: L^2(\R^n; \R^n) \to L^2(\R^n; \R^n)$ the Leray projection operator, \emph{i.e.}, the $L^2$-orthonormal projection of vector fields on to the divergence-free part. 
The \emph{Stokes operator} is $$A:=-\mathbb{P} \Delta.$$ Then we recast original Stokes equations \eqref{2024.11.18.1} into the ordinary differential equations:
$$
\frac{d}{d t} \mathbf{v}+A \mathbf{v}=\mathbb{P} f.
$$
The solution to the problem \eqref{2024.11.18.1} is given by the formula
\begin{align} \label{2024.11.18.2}
\mathbf{v}(t)=e^{-t A} \mathbf{v}_{0}+\int_0^t e^{-(t-s) A} \mathbb{P} f(s) \,\mathrm{d} s,
\end{align}
where $e^{-t A}$ is the semigroup operator generated by $A$, \emph{a.k.a.} the Stokes semigroup.

As in Miyakawa \cite{MR1815476}, by considering $\operatorname{div} ( \mathbf{v} \otimes \mathbf{v})$ as the source term $f$, we obtain the integral form of solutions of NSE \eqref{2024.10.31.1}: 
\begin{align} \label{2024.11.1.1}
\mathbf{v}( t) = e^{-tA}\mathbf{v}_{0} - \int _{0}^{t} \nabla \cdot e^{-(t - s)A} \mathbb{P}( \mathbf{v} \otimes \mathbf{v}) (s) \,\mathrm{d} s. 
\end{align}
In this case, $\mathbf{v}$ is said to be a mild solution of NSE \eqref{2024.10.31.1}. 

\subsection{Decay rates of $\mathbf{v}$} 

The decay properties for the solutions of NSE in the whole space have been extensively studied; see Brandolese \cite{ MR2076682}, Kukavica \cite{MR1855669}, Schonbek \cite{MR0775190}, Takahashi \cite{MR1692815} and the many references cited therein. Our subsequent developments rely heavily on the spacetime decay estimates for the strong solutions of NSE together with their derivatives. To this end, we refer to the survey paper \cite{MR3288012} by Schonbek, which provides a concise summary of results \emph{\`{a} la} Miyakawa \cite{MR1815476} and establishes decay estimates for higher-order derivatives. 
\begin{proposition} \label{thm 2.1}
(\cite[Theorem 4.1]{MR3288012}) Let $\gamma = n + 1 + k$ with $k \in \{0,1,2,\ldots\}$, and let $\mathbf{v}_{0} \in {C}^{k}_0\left(\R^n;\R^n\right)$ be a soleinoidal vector field such that
\begin{align} \label{2024.11.6.1}
\left| {e}^{-tA} D^k \mathbf{v}_{0} (x) \right| \leq {C}_{0}\left( 1 + | x| \right)^{-\alpha } ( 1 + t) ^{-\beta /2}\qquad \text{for each } \alpha, \beta \geq 0 \text{ with } \alpha + \beta = \gamma.
\end{align}
If ${C}_{0}$ is small enough, then there exists a mild solution $\mathbf{v}$ of \eqref{2024.10.31.1} in $\mathbb{R}^n$ with initial datum $\mathbf{v}_{0}$ such that
\begin{align} \label{2024.11.6.2}
\left| D^k \mathbf{v} (x,t) \right| \leq C \left( 1 +| x| \right) ^{-\alpha } ( 1 + t)^{-\beta /2}\qquad \text{for each } \alpha, \beta \geq 0 \text{ with }\alpha + \beta = \gamma. 
\end{align}
The solution $\mathbf{v}$ satisfies the initial condition $\mathbf{v}\big|_{t = 0} = \mathbf{v}_{0}$ in the sense that
$$
\mathop{\lim }\limits_{t \searrow 0} D^{I} \mathbf{v} (x,t) = {D}^{I} \mathbf{v}_{0} ( x) \qquad \text{for  a.e. }x \in {\mathbb{R}}^{n}, 
$$
where $I \in \mathbb{N}^n$ is an arbitrary multi-index with $|I| \leq k$.
\end{proposition}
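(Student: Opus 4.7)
The plan is to realize $\mathbf{v}$ as a fixed point of the Duhamel map
$$T\mathbf{v}(t) := e^{-tA}\mathbf{v}_0 - \int_0^t e^{-(t-s)A}\mathbb{P}\,\nabla\cdot(\mathbf{v}\otimes\mathbf{v})(s)\,\dd s$$
in a weighted Banach space tailored to the decay profile \eqref{2024.11.6.2}. With $\gamma = n+1+k$, I would work in the space $X_k$ of continuous vector fields equipped with the norm
$$\|\mathbf{v}\|_{X_k} := \sup_{\substack{\alpha,\beta\geq 0\\ \alpha+\beta=\gamma}}\sup_{x\in\R^n,\,t\geq 0}(1+|x|)^\alpha (1+t)^{\beta/2} |D^k\mathbf{v}(x,t)|,$$
proceeding by induction on $k$ so that analogous pointwise decay bounds for $D^j\mathbf{v}$ with $j<k$ (with weight $n+1+j$) are available from previous steps. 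By hypothesis \eqref{2024.11.6.1}, the linear piece $e^{-tA}\mathbf{v}_0$ already lies in $X_k$ with norm at most $C_0$, so the real task is to control the bilinear Duhamel term.

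The key technical ingredient is a pointwise kernel bound of the form
$$\bigl| D^k e^{-tA}\,\mathbb{P}\,\nabla\cdot G(x)\bigr| \lesssim \int_{\R^n}\frac{|G(y)|}{(|x-y|+\sqrt{t})^{n+k+1}}\,\dd y,$$
which follows from Miyakawa's estimates on the Stokes semigroup (see the Appendix and \cite{MR1815476}) and combines parabolic scaling with Gaussian spatial tails. I would apply this with $G = \mathbf{v}\otimes\mathbf{v}$, distributing derivatives via the Leibniz rule $D^k(\mathbf{v}\otimes\mathbf{v}) = \sum_{j=0}^{k}\binom{k}{j}D^j\mathbf{v}\otimes D^{k-j}\mathbf{v}$, where the inductive decay hypothesis controls each factor. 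The resulting spacetime integral is then estimated by splitting the $y$-integration into the regions $|y-x|\lesssim (|x|+\sqrt{t})$ and its complement, and the $s$-integration into $s\leq t/2$ and $s\geq t/2$; in every regime the combined weight $2\gamma$ carried by the two factors of $\mathbf{v}$ strictly exceeds the singular weight $n+k+1$ of the kernel, so a direct computation yields a bound of the form $C\|\mathbf{v}\|_{X_k}^2 (1+|x|)^{-\alpha}(1+t)^{-\beta/2}$ uniformly over admissible $(\alpha,\beta)$.

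The smallness of $C_0$ then guarantees that $T$ is a contraction on a sufficiently small $X_k$-ball, producing a unique fixed point $\mathbf{v}$ with the claimed decay. The assertion $D^I\mathbf{v}(\cdot,t)\to D^I\mathbf{v}_0$ a.e.\ as $t\searrow 0$ for $|I|\leq k$ follows from the standard continuity of $e^{-tA}$ on continuous data, together with the fact that the Duhamel contribution vanishes (in the appropriate sense) as $t\searrow 0$.

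The main obstacle is the bilinear estimate itself: one must verify that the weights balance \emph{sharply}, i.e.\ that for every regime of $(x,t)$ and every splitting $\alpha+\beta=\gamma$ of the decay budget, the singular kernel integrated against the two-factor decay of $\mathbf{v}\otimes\mathbf{v}$ reproduces exactly the decay $(1+|x|)^{-\alpha}(1+t)^{-\beta/2}$ rather than a weaker profile. This delicate scaling computation is the heart of Miyakawa's argument, carried out in detail in \cite[\S4]{MR3288012}; my proposal is essentially to reproduce it, supplying the Leibniz/induction adjustments needed for higher derivatives.
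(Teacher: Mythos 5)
Your proposal is correct and coincides with the argument the paper adopts by citation: Proposition~\ref{thm 2.1} is quoted from Schonbek \cite[Theorem~4.1]{MR3288012} (following Miyakawa \cite{MR1815476}), whose proof is exactly the weighted fixed-point scheme you describe — Duhamel formulation, the kernel bound $|D^k K_2(x,t)|\lesssim(|x|+\sqrt{t})^{-(n+1+k)}$ for $\nabla e^{-tA}\mathbb{P}$, region-splitting of the bilinear integral, and contraction under smallness of $C_0$. The paper itself reuses this same machinery (Lemmas~\ref{lemma 4.1}--\ref{lemma 4.2} and the space $\mathcal{X}$) for its main theorem, so no further comparison is needed.
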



As a consequence of pointwise decay estimates, 
\begin{align} \label{2024.11.29.5}
\left\|D^k \mathbf{v}(\cdot ,t)\right\|_{L^q( B_R \smallsetminus {B}_{R - 1})}^q &\le C\int_{R-1}^R ( 1 +r ) ^{-q(n+1+k) } r^{n-1} \,\mathrm{d} r\nonumber\\
&\le C R^{-(qn+q+qk-n+1)}.
\end{align}
This implies that
\begin{align}\label{2024.11.29.1}
\left\|D^k \mathbf{v}(\cdot ,t)\right\|_{L^q( B_R \smallsetminus {B}_{R - 1})} \le C R^{-[n+1+k-(n-1)/q]}.
\end{align}
Similarly, we have that 
\begin{align} \label{2024.12.30.6}
\left\| D^k \mathbf{v}(\cdot ,t)\right\|_{L^q( B_R \smallsetminus {B}_{R - 1})}^q  &\le C t^{-[q(n+1+k)-n]/2} \int_{B_R \smallsetminus {B}_{R - 1}} |x|^{-n} \,\mathrm{d} x, 
\end{align}
and consequently
\begin{align} \label{2024.12.31.7}
\left\|D^k \mathbf{v}(\cdot ,t)\right\|_{L^q( B_R \smallsetminus {B}_{R - 1})} \leq C t^{-(n+1+k- \frac{n}{q}) /2} \text{ for } 1 \leq q \leq \infty.
\end{align}

\subsection{Estimates for pressure $p$} \label{section 1.1}
In this subsection, we derive estimates for the pressure $p$ and its derivatives away from the origin, which is crucial to our arguments.

An application of the divergence operator to both sides of Equation \eqref{2024.12.03.1} yields
\begin{align*}
-\Delta p(x,t)=\operatorname{div}\operatorname{div}(\mathbf{v}\otimes \mathbf{v})(x,t).
\end{align*}
Hence,
\begin{align}\label{Dkp}
&D^k p(x,t)=\int_{\mathbb{R}^{n}} C(n) \frac{1}{|x-y|^{n-2}} D^k \operatorname{div}\operatorname{div}(\mathbf{v}\otimes \mathbf{v}) (y,t) \,\mathrm{d}y.
\end{align} To investigate the pointwise estimate of $p$, we make use of the representation formula~\eqref{Dkp} and Proposition~\ref{thm 2.1}. Indeed, it follows from the pointwise decay estimate~\eqref{2024.11.6.2} of $\mathbf{v}$ that
$$
\left|D^k( \mathbf{v} \otimes \mathbf{v}) (y,t) \right| \leq \mathop{\sum }\limits_{j = 0}^{k} \left|D^j \mathbf{v}\right| \left|D^{k - j}\mathbf{v}\right| \leq C ( 1 + | y| )^{-( \alpha_{j} + \alpha_{k - j})} ( 1 + t) ^{-( \beta_{j}/2 + \beta_{k - j}/2)}
$$
$$
\text{with }\, \alpha_{j} + \alpha_{k - j} + \beta_{j} + \beta_{k - j} = 2(n + 1) + k.
$$
Thus, by integration by parts,
\begin{align*}
|p(x,t)| &\le C(n) \left(\int_{|x - y| \leq |x| /2} \frac{1}{|x-y|^{n-2}}\left|\operatorname{div}\operatorname{div}(\mathbf{v}\otimes \mathbf{v})\right| (y,t)\,\mathrm{d}y+\int_{|x - y| > |x| /2} \frac{\left|\mathbf{v}\otimes \mathbf{v} \right|(y,t)}{|x-y|^{n}} \,\mathrm{d}y\right)\\
&\le C(n)\left(\int_{|x - y| \leq |x| /2} | x - y|^{2 - n} ( 1 + |y| )^{-2n-4} \,\mathrm{d} y+\int_{|x - y| > |x| /2} | x - y|^{- n} ( 1 + |y| )^{-2n-2} \,\mathrm{d} y\right).
\end{align*}
In the region $|x - y| \leq |x| /2$, the inequality $|x| \leq |x - y| + |y| \leq |x|/2+ |y|$ holds true, which implies $ |x| \leq 2 |y|$. On the other hand, in the region $|x - y| > |x| /2$, we have $|x - y|^{-n} \le (|x|/2)^{-n}$, and $\int_{|x - y| > |x|/2} ( 1 + | y| )^{- 2n-2}\,\mathrm{d} y $ is finite. As a consequence,
\begin{align} \label{2024.12.03.2}
|p(x,t)| &\le  C(n) \left( |x|^{2} ( 1 + |x|)^{-2n-4}+ |x|^{-n} \right) \nonumber\\
&\le C(n)  ( 1 + |x|)^{-n}.
\end{align}

Similar arguments (but taking into account here the temporal decay of ${\bf v}$) lead to
\begin{align}
|p(x,t)| &\le C(n) \left|\int_{\mathbb{R}^{n}} \frac{1}{|x-y|^{n-2}}( 1 + |y| )^{-(n+2)}( 1 + t) ^{-\frac{n + 2}{2}} \mathrm{d}y\right|\notag\\
&\le C(n) \left(\int_{|x - y| \leq |x| /2}+\int_{|x - y| > |x| /2}  \right)\frac{1}{|x-y|^{n-2}}( 1 + |y| )^{-(n+2)}( 1 + t) ^{-\frac{n + 2}{2}} \mathrm{d}y\notag\\
&\le C(n)( 1 + t) ^{-\frac{n + 2}{2}}\left( |x|^{2} ( 1 + |x|)^{-(n+2)}+ |x|^{-(n-2)} \right)\notag\\
&\le C(n)( 1 + t) ^{-\frac{n + 2}{2}}.  \label{2024.12.08.1}
\end{align}
Putting \eqref{2024.12.03.2} and \eqref{2024.12.08.1} together, we bound
\begin{align} \label{2024.12.08.2}
|p(x,t)| \le C (1 +|x|) ^{-\alpha } ( 1 + t)^{-\beta /2}
\end{align}
for all $\alpha, \beta \geq 0 $ with $\alpha + \beta = n$.

Arguments analogous to those leading to the estimate~\eqref{2024.12.08.2} allow us to achieve
\begin{align} \label{2024.12.08.3}
\left|D^k p(x,t)\right| \le C (1 +|x|) ^{-\alpha } ( 1 + t)^{-\beta /2}
\end{align}
for all $\alpha, \beta \geq 0 $ with $\alpha + \beta = n+k$. Also, arguments parallel to those for \eqref{2024.11.29.5} and \eqref{2024.12.30.6} give us
\begin{equation}
    \begin{cases}\label{2024.12.08.4}
\|p(\cdot ,t)\|_{L^q( B_R \smallsetminus {B}_{R - 1})} \le C R^{-[n-(n-1)/q]},\\
\left\|D^k p(\cdot ,t)\right\|_{L^q( B_R \smallsetminus {B}_{R - 1})}  \le C R^{-[n+k-(n-1)/q]},
    \end{cases}    
\end{equation}
as well as
\begin{equation}\label{2025.1.7.8}
    \left\|D^k p(\cdot ,t)\right\|_{L^q( B_R \smallsetminus {B}_{R - 1})} \leq C t^{-(n+k- \frac{n}{q}) /2}\qquad\text{for any $1 \leq q \leq \infty$.}
\end{equation}


\section {Truncated velocity estimates and the Bogovskiĭ-type correction}\label{sec: est}

We now establish the decay of the mild solution within the ball and resolve the non-divergence-free issue of the truncated velocity $\phi \mathbf{v}$.

\subsection {Decay estimates of mild solution in a ball}
If the initial value $\mathbf{v}_0$ satisfies the condition~\eqref{2024.11.6.1}, then by  Proposition~\ref{thm 2.1} there exists a solution $\mathbf{v}$ exhibiting the pointwise decay property as in \eqref{2024.11.6.2}. As $\phi$ is  compactly supported, we easily deduce that
\begin{equation}\label{2025.1.13.1}
\left\|D^k (\phi \mathbf{v})(\cdot ,t)\right\|_{L^q( B_R \smallsetminus {B}_{R - 1})} \le \begin{cases}
C R^{-[n+1+k-(n-1)/q]},\\
C t^{-(n+1+k- \frac{n}{q}) /2},
\end{cases}
\end{equation}  
for any $1\leq q \leq \infty$.

Also, recall from the definition of $F_1$ in \eqref{2024.11.18.5} that $F_1 \equiv 0$ on $B_{R-1}$. Thus it is supported on the annulus ${B}_R \smallsetminus {B}_{R-1}$. The pointwise decay estimate~\eqref{2024.11.6.2} for $\mathbf{v}$ yields that
\begin{align} \label{2024.12.13.1}
|{F}_{1} (x,t)| \le& C\Big(\left| ( {\mathbf{v} \cdot \nabla }) \mathbf{v}  \right|+ | \mathbf{v}|^2 +|\mathbf{v}|+ |\nabla \mathbf{v} |\Big)\leq C \left( 1 +| x| \right) ^{-\alpha } ( 1 + t)^{-\beta /2}
\end{align}
for all $\alpha, \beta \geq 0 $ with $\alpha + \beta = n+1$. Consequently, arguments similar to those in \eqref{2024.11.29.5} lead to 
\begin{align*}
\|{F}_{1} (\cdot ,t)\|_{L^q( B_R)}^q = \|{F}_{1} (\cdot ,t)\|_{L^q( B_R \smallsetminus {B}_{R - 1})}^q \le C\left\| ( 1 +| x| ) ^{-(n+1)} \right\|_{L^q( B_R \smallsetminus {B}_{R - 1})}^q,
\end{align*}
which shows that
\begin{align} \label{2024.11.18.6}
\|{F}_{1} (\cdot ,t)\|_{L^q( B_R)} \le C R^{-[n+1-(n-1)/q]}.
\end{align}
Similarly, one deduces that
\begin{equation}
    \left|D^k {F}_{1} (x,t)\right| \le C \left( 1 +| x| \right) ^{-\alpha } ( 1 + t)^{-\beta /2}\qquad  \text{ with } \alpha + \beta = n+1+k
\end{equation}
and that
\begin{equation}\label{2024.12.23.2}
\left\|D^k{F}_{1} (\cdot ,t)\right\|_{L^q( B_R)} \le     \begin{cases}
C R^{-[n+1+k-(n-1)/q]},\\
 C t^{-(n+1+k- \frac{n}{q}) /2}.
    \end{cases}
\end{equation}
Here $q \in [1,\infty]$ is arbitrary. 

\subsection{Bogovskiĭ-type correction}
Recall the definition of the corrector function $\mathbf{v}_{c}$ ---  see Equation~\eqref{2025.1.7.1}, reproduced here: 
\begin{align*}
 {\partial }_{t} \mathbf{v}_{c}- \Delta \mathbf{v}_{c} + ( \mathbf{v}_{c} \cdot \nabla ) \mathbf{v}_{c} + ( \mathbf{v}_{c} \cdot \nabla ) ( \phi \mathbf{v}) + ( \phi \mathbf{v} \cdot \nabla ) \mathbf{v}_{c} &:= {F}_{2} \;&&\text{ in } B_R \times [0,T],\\
\operatorname{div} \mathbf{v}_{c}&=-\operatorname{div}( \phi \mathbf{v}) \;&&\text{ in }B_R \times [0,T].
\end{align*}
Recall also that the vector field
$$
\mathbf{r} := \phi \mathbf{v} + \mathbf{v}_{c}
$$ 
is divergence-free. Since $\operatorname{div}( \phi \mathbf{v})=0$ on ${B}_{R - 1}$, it follows that $\mathbf{v}_{c}$ vanishes on ${B}_{R - 1}$. Moreover, $F_2$ also vanishes within $B_{R-1}$. Thus we shall only need to consider the PDE for ${\bf v}_c$ on $\Omega := B_R \smallsetminus {B}_{R - 1}$. The well-known Bogovskiĭ Lemma~\ref{the Bogovskiĭ lemma} applied to $f=-\operatorname{div}(\phi \mathbf{v})$ yields that
\begin{subequations} \label{2024.12.30.2}
\begin{align}
&\|\mathbf{v}_{c}(\cdot,t)\|_{{W}^{k,q}( B_R \smallsetminus {B}_{R - 1})} \le C \| \mathbf{v}(\cdot,t)\|_{{W}^{k,q}(B_R \smallsetminus {B}_{R - 1}) } && \text{ for }1 < q < \infty,\\
&\|\Delta \mathbf{v}_{c}(\cdot,t)\|_{L^q( B_R \smallsetminus {B}_{R - 1})} \le C \|\Delta \mathbf{v}(\cdot,t)\|_{{L}^{q}( B_R \smallsetminus {B}_{R - 1}) }&& \text{ for }1 < q < \infty,\\
 \text{ and }\quad &\|{\partial }_{t}\mathbf{v}_{c}(\cdot,t)\|_{L^q( B_R \smallsetminus {B}_{R - 1})} \le C {\|\partial_{t} \mathbf{v}(\cdot,t)\|}_{L^q( B_R \smallsetminus {B}_{R - 1}) } && \text{ for }1 < q < \infty.
\end{align}
\end{subequations}
This together with the definition~ \eqref{2024.11.11} of $\mathbf{r}$ and estimates~\eqref{2024.11.29.1}, \eqref{2024.12.31.7} for ${\bf v}$ implies that
\begin{equation}
\left\|D^k \mathbf{r} (\cdot ,t)\right\|_{L^q( B_R )} \le  \begin{cases} C R^{-(n+1+k-n/q)},\\
 C t^{-(n+1+k- \frac{n}{q}) /2}\label{2024.12.1.1}
\end{cases}
\end{equation} 
for $1 < q < \infty$.

Moreover, using the estimate~\eqref{2024.11.29.1} of $D^k \mathbf{v}$ in the annular region, we obtain that 
\begin{small}
\begin{align}
\|{F}_{2}\|_{L^q( B_R )}= \|{F}_{2}\|_{L^q( B_R \smallsetminus {B}_{R - 1})} &= \|{\partial }_{t} \mathbf{v}_{c}- \Delta \mathbf{v}_{c} + ( \mathbf{v}_{c} \cdot \nabla ) \mathbf{v}_{c} + ( \mathbf{v}_{c} \cdot \nabla ) ( \phi \mathbf{v}) + ( \phi \mathbf{v} \cdot \nabla ) \mathbf{v}_{c}\|_{L^q( B_R \smallsetminus {B}_{R - 1})}\notag\\
&\le  C \left\{\|\partial_{t}\mathbf{v}_{c}\|_{L^q( B_R \smallsetminus {B}_{R - 1})}+ R^{-[n+3-(n-1)/q]}\right\}.
\end{align}
\end{small}
It follows that
\begin{align*}
\|\partial_{t}\mathbf{v}\|_{L^q( B_R \smallsetminus {B}_{R - 1})} &\le \|\Delta \mathbf{v}\|_{L^q( B_R \smallsetminus {B}_{R - 1})} + \| ( \mathbf{v} \cdot \nabla ) \mathbf{v}\|_{L^q( B_R \smallsetminus {B}_{R - 1})} + \|\nabla p \|_{L^q( B_R \smallsetminus {B}_{R - 1})}\\
&\le C R^{-[n+1-(n-1)/q]},
\end{align*}
where Equations~\eqref{2024.12.08.4} and \eqref{2024.11.29.1} are used to control the terms  $\na p$ and $\p_t {\bf v}$, respectively. Thus,
\begin{align} \label{2024.12.15.1}
\|{F}_{2}(\cdot ,t)\|_{L^q( B_R \smallsetminus {B}_{R - 1})} \le C R^{-[n+1-(n-1)/q]},
\end{align}
which together with the estimate~\eqref{2024.11.18.6} for $F_1$ gives us
\begin{align} \label{2024.11.29.4}
\|({F}_{1}+{F}_{2})(\cdot ,t)\|_{L^q( B_R)} = \|({F}_{1}+{F}_{2})(\cdot ,t)\|_{L^q( B_R \smallsetminus {B}_{R - 1})} \le C R^{-[n+1-(n-1)/q]}.
\end{align}

Similar arguments lead to the spacetime decay of higher-order derivatives of $F_2$ and $F_1+F_2$:
\begin{equation}\label{2024.12.23.3}
\left\|D^k {F}_{2}(\cdot ,t)\right\|_{L^q( B_R )} + \left\|D^k({F}_{1}+{F}_{2})(\cdot ,t)\right\|_{L^q( B_R)} \le C R^{-[n+1+k-(n-1)/q]}
\end{equation}
and
\begin{equation}\label{2025.1.7.7}
\left\|D^k ({F}_{1}+{F}_{2})(\cdot ,t)\right\|_{L^q( B_R)} \leq C t^{-(n+1+k- \frac{n}{q}) /2}.
\end{equation}
In all the inequalities in this subsection, $q \in (1,\infty)$ is arbitrary.

\section{Iterative estimates for NSE on $B_R$ with small forcing}\label{sec: iteration}

This section is devoted to the construction of the solutions of NSE on $B_R$ with a small forcing term. Recall from \eqref{2024.11.19.1} that $\mathbf{w} := \mathbf{r} + \tilde{\mathbf{v}}$ and $\pi := \phi p + \bar{p}$. Our goal here is to find $\left(\tilde{\mathbf{v}}, \bar{p}\right)$ via Eq.~\eqref{2024.11.2.1}. This shall lead to $(\mathbf{w},\pi)$ that satisfies the estimates in Theorem~\ref{main thm}.

To begin with, as $\phi \mathbf{v} \equiv \mathbf{v}$ on ${B}_{R - 1}$, and $\mathbf{v}_{c}\equiv 0$ on $B_{R-1}$, we see that $\mathbf{r} = \phi \mathbf{v} + \mathbf{v}_{c} = \mathbf{v}$ on $B_{R-1}$, which solves the NSE~\eqref{2024.10.31.1} thereon. Thus $\tilde{\mathbf{v}}$ is supported on $B_R \setminus {B}_{R - 1}$.

We shall establish the existence of solution for Eq.~\eqref{2024.11.2.1}, via the fixed-point theorem. Inspired by the formal solution in Eq.~\eqref{2024.11.18.2}, we seek a solution of the form
\begin{align}
\tilde{\mathbf{v}}(t)=e^{-t A} \mathbf{v}_{0}-\int_0^t e^{-(t-s) A} \mathbb{P} \bigg\{( \tilde{\mathbf{v}} \cdot \nabla ) \tilde{\mathbf{v}}+F+ p \nabla \phi+ ( \tilde{\mathbf{v}} \cdot \nabla ) \mathbf{r} + ( \mathbf{r} \cdot \nabla ) \tilde{\mathbf{v}}\bigg\}(s) \,\mathrm{d} s.
\end{align}
For this purpose, we design the iteration scheme as follows: with a solenoidal vector field ${\bf u}_i$ given for $i \in \mathbb{N}$ and ${\bf u}_0=0$, we define ${\bf u}_{i+1}$ via
\begin{align}\label{iteration}
\mathbf{u}_{i+1} (t) & =e^{-t A} \mathbf{v}_{0}-\int_0^t e^{-(t-s) A} \mathbb{P} \bigg\{( \mathbf{u}_i \cdot \nabla ) \mathbf{u}_i +F + p \nabla \phi+ ( \mathbf{u}_i  \cdot \nabla ) \mathbf{r} + ( \mathbf{r} \cdot \nabla ) \mathbf{u}_i \bigg\}(s) \,\mathrm{d} s\notag\\
&= e^{-t A} \mathbf{v}_{0}-\int_0^t e^{-(t-s) A} \mathbb{P} (F-\phi\nabla p )(s) \,\mathrm{d} s\nonumber\\
&\qquad -\int_0^t e^{-(t-s) A} \mathbb{P}\bigg\{\nabla\cdot(p \phi \,\id+ \mathbf{u}_i \otimes \mathbf{u}_i + \mathbf{u}_i  \otimes \mathbf{r} + \mathbf{r} \otimes \mathbf{u}_i )\bigg\}(s) \,\mathrm{d} s\notag\\
&= e^{-t A} \mathbf{v}_{0}+\int_0^t\!\int_{\mathbb{R}^n} K_1(x-y,t-s) (\phi \nabla p)(y,s) \,\mathrm{d} y \,\mathrm{d} s \nonumber\\
&\qquad -\int_0^t\!\int_{\mathbb{R}^n} K_1(x-y,t-s) F(y,s) \,\mathrm{d} y \,\mathrm{d} s\notag\\
&\qquad-\int_0^t\!\int_{\mathbb{R}^n} K_2(x-y,t-s) ( p \phi\, \id+ \mathbf{u}_i \otimes \mathbf{u}_i + \mathbf{u}_i  \otimes \mathbf{r} + \mathbf{r} \otimes \mathbf{u}_i )(y,s) \,\mathrm{d} y \,\mathrm{d} s.
\end{align}
In the second equality we use the identities $\operatorname{div}\mathbf{u}_i =\operatorname{div} \mathbf{r}=0$. Here $K_1$ and $K_2$ are the kernel functions  associated with $e^{-tA}\mathbb{P}=(e^{-tA}\mathbb{P}_{jk})$ and $\nabla e^{-t A} \mathbb{P}=(\partial_{x_\ell} e^{-t A} \mathbb{P}_{j k})$, respectively; and $\id$ is the identity matrix.

The crucial tool for controlling  $\mathbf{u}_{i+1}$ are the following potential estimates for kernels.

\begin{lemma} \label{lemma 4.1}
The kernel function $K_1(x,t)$ of $e^{-tA}\mathbb{P}$ satisfies
\begin{align} 
&\left |D^k K_1(x,t)\right | \leq C |x|^{-\alpha }t^{-\beta /2}\qquad \qquad  \text{for any } \alpha, \beta \geq 0 \text{ with }\alpha + \beta = n+k  \label{2024.11.18.8}
\end{align}
and
\begin{align} \label{2024.11.18.7}
\| K_1(\cdot ,t)\|_{L^q(\R^n \smallsetminus B_1)} \leq C' t^{-\left(n- \frac{n}{q}\right) /2} \qquad \qquad \text{ for any } 1 \leq q \leq \infty.
\end{align}
Here $k\in\{0,1,2,\ldots\}$, $C=C(n,k)$, and $C'=C'(n,k,q)$.
\end{lemma}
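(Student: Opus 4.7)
The plan is to reduce both bounds to a single pointwise estimate at unit time via parabolic scaling. The Fourier symbol of $e^{-tA}\mathbb{P}$ on $\R^n$ is the matrix $e^{-t|\xi|^2}(\delta_{jk}-\xi_j\xi_k/|\xi|^2)$, so the corresponding convolution kernel decomposes as
\begin{equation*}
K_1^{jk}(x,t)=G(x,t)\,\delta_{jk}+(R_jR_k G)(x,t),
\end{equation*}
where $G(x,t)=(4\pi t)^{-n/2}e^{-|x|^2/(4t)}$ is the heat kernel and $R_j$ is the $j$th Riesz transform. Since $\mathbb{P}$ commutes with dilations and $G$ is parabolically self-similar, one obtains the identity
\begin{equation*}
D^k K_1(x,t)=t^{-(n+k)/2}\,(D^k K_1)\bigl(x/\sqrt{t},\,1\bigr),
\end{equation*}
and consequently both \eqref{2024.11.18.8} and \eqref{2024.11.18.7} reduce to the single pointwise claim $|(D^k K_1)(y,1)|\leq C(1+|y|)^{-(n+k)}$ for all $y\in\R^n$.

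For this pointwise claim, the Gaussian piece $D^k G(\cdot,1)\delta_{jk}$ has faster-than-polynomial decay and is immediate. For the Riesz piece I would write $R_jR_k G(\cdot,1)=-\partial_j\partial_k N * G(\cdot,1)$ using the Newton kernel $N(z)=c_n|z|^{2-n}$ (valid since $n\geq 3$), shift derivatives onto $N$ so that $|D^k\partial_j\partial_k N(z)|\leq C|z|^{-n-k}$ away from the origin, and split the convolution integral into the inner region $\{|z|\leq|y|/2\}$, where $G(y-z,1)$ provides rapid decay in $|y|$, and the outer region $\{|z|>|y|/2\}$, where $|D^k\partial_j\partial_k N|$ is dominated by $C|y|^{-n-k}$ and $G$ is a probability density. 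The standard dichotomy then yields the required decay.

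Given the pointwise bound, inequality~\eqref{2024.11.18.8} follows from the elementary inequality $(|x|^2+t)^{-(n+k)/2}\leq |x|^{-\alpha}t^{-\beta/2}$ for any $\alpha,\beta\geq 0$ with $\alpha+\beta=n+k$, applied with $y=x/\sqrt{t}$. For \eqref{2024.11.18.7} I plug the pointwise bound into
\begin{equation*}
\int_{|x|\geq 1}|K_1(x,t)|^q\,\mathrm{d}x\leq Ct^{-nq/2}\int_{|x|\geq 1}\bigl(1+|x|/\sqrt{t}\bigr)^{-nq}\,\mathrm{d}x,
\end{equation*}
change variables $\rho=x/\sqrt{t}$, and use $nq>n$ (valid for $q>1$) to bound the $\rho$-integral uniformly in $t$ by $\int_{\R^n}(1+|\rho|)^{-nq}\,\mathrm{d}\rho<\infty$. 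This yields $Ct^{-(nq-n)/2}$, and the $q$-th root gives \eqref{2024.11.18.7}; the case $q=\infty$ is immediate from the pointwise bound. The main obstacle I anticipate is establishing the sharp pointwise decay of the Riesz piece at infinity: because the symbol $\xi_j\xi_k/|\xi|^2$ is discontinuous at the origin, the Riesz part only decays polynomially rather than exponentially, and this is precisely why both the argument above and the $L^q$-inequality itself degenerate at the endpoint $q=1$.
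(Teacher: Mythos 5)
Your strategy is essentially the paper's: split the kernel on the Fourier side into the heat kernel $G\,\delta_{jk}$ plus the Riesz part with symbol $-\xi_j\xi_k|\xi|^{-2}e^{-t|\xi|^2}$, prove a pointwise space--time bound for each piece, and then integrate. The execution differs in two ways. You exploit the exact parabolic self-similarity $D^kK_1(x,t)=t^{-(n+k)/2}(D^kK_1)(x/\sqrt{t},1)$ to reduce everything to $t=1$, whereas the paper keeps $t$ and uses the subordination formula $|\xi|^{-2}=\int_0^\infty e^{-s|\xi|^2}\,\mathrm{d}s$ to write the Riesz part as $\int_t^\infty\partial_{x_j}\partial_{x_k}E_s\,\mathrm{d}s$; your reduction is cleaner and makes the interpolation step transparent. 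One step of yours needs repair: in the inner region $\{|z|\le|y|/2\}$ you keep all derivatives on the Newton kernel, but $|D^k\partial_j\partial_k N(z)|\sim|z|^{-n-k}$ is not locally integrable, so the rapid decay of $G(y-z,1)$ cannot rescue a divergent integral. You must either leave the derivatives on $G$ in that region (using $|y-z|\ge|y|/2$ for Gaussian decay and $\int_{|z|\le|y|/2}|N(z)|\,\mathrm{d}z\le C|y|^{2}$), collecting the boundary terms from the integration by parts on $\{|z|=|y|/2\}$, or invoke the principal-value/mean-zero structure of $\partial_j\partial_kN$. This is routine but must be written out; as stated the inner estimate fails.

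Your closing remark about $q=1$ is not a defect of your argument but of the lemma as stated: the symbol of $K_1(\cdot,t)$ is discontinuous at $\xi=0$, so $K_1(\cdot,t)\notin L^1(\R^n)$ and the Riesz piece genuinely decays only like $|x|^{-n}$, whence \eqref{2024.11.18.7} cannot hold at the endpoint $q=1$. The paper's proof of that endpoint rests on the intermediate bound $|K^2_{jk}(x,t)|\le C|x|^{-\alpha}t^{-\beta/2}$ with $\alpha+\beta=n+1$, derived from $|\partial_{x_j}\partial_{x_k}E_s(x)|\le Cs^{-(n+3)/2}e^{-c|x|^2/s}$; the correct exponent there is $-(n+2)/2$, which only yields $\alpha+\beta=n$ (consistent with the exact scaling $\|K^2(\cdot,t)\|_{L^\infty}=t^{-n/2}\|K^2(\cdot,1)\|_{L^\infty}$). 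So your restriction to $q>1$ is the honest range; note that the paper later invokes \eqref{2024.11.18.7} with $q=1$ in the estimate of $I_2$, which would then need to be rerouted (e.g.\ by using the Gaussian and Riesz pieces separately, or $q$ slightly above $1$).
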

\begin{proof}
See the Appendix.
\end{proof}

\begin{lemma} \label{lemma 4.2}
The kernel function $K_2(x,t)$ of $\nabla e^{-t A} \mathbb{P}$ satisfies
\begin{align} 
&\left |D^k K_2(x,t)\right | \leq C |x|^{-\alpha }t^{-\beta /2}\qquad \qquad  \text{for any } \alpha, \beta \geq 0 \text{ with }\alpha + \beta = n+k+1   \label{2024.11.18.11}
\end{align}
and
\begin{align} \label{2024.11.18.10}
\| K_2(\cdot ,t)\|_{L^q(\R^n \smallsetminus B_1)} \leq C' t^{-\left(n+1- \frac{n}{q}\right) /2}\qquad\qquad
\text{ for any } 1 \leq q \leq \infty.
\end{align}
Here $k\in\{0,1,2,\ldots\}$, $C=C(n,k)$, and $C'=C'(n,k,q)$.
\end{lemma}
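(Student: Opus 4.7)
The plan is to derive Lemma~\ref{lemma 4.2} directly from Lemma~\ref{lemma 4.1} by exploiting the identification $K_2 = \nabla_x K_1$ componentwise. This follows immediately from the definitions, since $K_2$ is the kernel of $\nabla e^{-tA}\mathbb{P}$ while $K_1$ is the kernel of $e^{-tA}\mathbb{P}$. Under this identification, $D^k K_2$ coincides (up to relabeling of multi-indices) with $D^{k+1} K_1$, so the pointwise estimate~\eqref{2024.11.18.11} with exponents $\alpha + \beta = n+k+1$ reduces verbatim to~\eqref{2024.11.18.8} applied at order $k+1$ with the same pair $(\alpha, \beta)$.

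For the $L^q$-bound~\eqref{2024.11.18.10}, I would invoke the parabolic scaling of the Stokes semigroup. Since the Fourier symbol of $\mathbb{P}$ is homogeneous of degree zero, the rescaling $\xi \mapsto \xi/\sqrt{t}$ in Fourier space yields
\begin{equation*}
K_2(x,t) = t^{-(n+1)/2}\, K_2\!\left(x/\sqrt{t},\,1\right).
\end{equation*}
Changing variables $y = x/\sqrt{t}$ then gives
\begin{equation*}
\|K_2(\cdot, t)\|_{L^q(\R^n \smallsetminus B_1)}^q \leq t^{-\frac{q(n+1) - n}{2}}\, \|K_2(\cdot, 1)\|_{L^q(\R^n)}^q,
\end{equation*}
after extending the domain of integration. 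The remaining norm $\|K_2(\cdot, 1)\|_{L^q(\R^n)}$ is finite for every $q \in [1, \infty]$ by the pointwise step: taking $(\alpha, \beta) = (0, n+1)$ in~\eqref{2024.11.18.11} bounds $K_2(\cdot, 1)$ uniformly on the unit ball, while $(\alpha, \beta) = (n+1, 0)$ gives the decay $|K_2(y, 1)| \leq C|y|^{-(n+1)}$ on $\R^n \smallsetminus B_1$, which ensures $L^q$-integrability at infinity since $q(n+1) > n$ for all $q \geq 1$. Taking $q$-th roots produces~\eqref{2024.11.18.10}; the case $q = \infty$ follows directly from the pointwise estimate.

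I do not anticipate any genuine obstacle here: once the identification $K_2 = \nabla K_1$ is recognised, everything is extracted from Lemma~\ref{lemma 4.1} together with elementary parabolic scaling, and all the hard Fourier-analytic work has already been absorbed into the preceding lemma. The only bookkeeping worth flagging is that the claimed $t$-scaling $t^{-(n+1-n/q)/2}$ must govern both the $t \ll 1$ and $t \gg 1$ regimes uniformly; the scaling substitution above makes this transparent, since the full $\R^n$-norm of $K_2(\cdot, 1)$ is $t$-independent and absorbs both ends.
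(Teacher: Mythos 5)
Your reduction rests on the identity $K_2=\nabla K_1$, which is correct, and your parabolic-scaling derivation of the $L^q$ bound (with the symbol of $\mathbb{P}$ homogeneous of degree zero, so $K_2(x,t)=t^{-(n+1)/2}K_2(x/\sqrt t,1)$) is sound and in fact cleaner than the direct computation the paper carries out for the analogous bound on $K_1$. The problem is logical, not computational: within this paper your argument is circular. The paper does not prove Lemma~\ref{lemma 4.2} at all --- it is quoted from Miyakawa \cite[Lemma 2.1]{MR1815476} and Schonbek \cite[Corollary 4.1]{MR3288012} --- and the Appendix then proves Lemma~\ref{lemma 4.1} in the \emph{opposite} direction to yours: the direct Fourier computation there covers only $k=0$, and the cases $k\ge 1$ of \eqref{2024.11.18.8} are obtained precisely from the bound $|D^kK_1|\le C|D^{k-1}K_2|$, i.e.\ from Lemma~\ref{lemma 4.2}. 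So when you invoke \eqref{2024.11.18.8} at order $k+1\ge 1$ to control $D^kK_2$, you are invoking a statement whose only proof in the paper already presupposes the estimate you are trying to establish.

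To make your route self-contained you would need an independent proof of the pointwise bounds for $D^{m}K_1$ for all $m\ge 1$ (equivalently, for $D^kK_2$ directly). This is a routine extension of the $k=0$ computation in the Appendix: for the Gaussian part one differentiates $K^1_{jk}(x,t)=Ct^{-n/2}e^{-c|x|^2/t}$ to get $|D^mK^1_{jk}|\le Ct^{-(n+m)/2}e^{-c|x|^2/t}$ and interpolates; for the nonlocal part one differentiates under the integral in $K^2_{jk}(x,t)=\int_t^\infty\partial_{x_j}\partial_{x_k}E_s(x)\,\dd s$, using $|D^m\partial_{x_j}\partial_{x_k}E_s(x)|\le Cs^{-(n+m+2)/2}e^{-c|x|^2/s}$, and repeats the two changes of variables. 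With that supplement (or by simply running the same Fourier-side computation on the symbol $i\xi_\ell e^{-t|\xi|^2}(\delta_{jk}-\xi_j\xi_k/|\xi|^2)$ of $\nabla e^{-tA}\mathbb{P}$ itself), your scaling argument for \eqref{2024.11.18.10} goes through verbatim; note also that your integrability check $q(n+1)>n$ for all $q\ge 1$ is exactly why the restriction to $\R^n\smallsetminus B_1$ in \eqref{2024.11.18.10} is harmless after rescaling.
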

\begin{proof}
See \cite [Lemma 2.1] {MR1815476} and \cite [Corollary 4.1] {MR3288012}.
\end{proof}

Now we are ready to establish the decay properties of the sequence $\mathbf{u}_i$, which is essential to prove the convergence of the above iteration scheme. Note that ${\bf u}_{i+1}$ remains solenoidal by construction in Eq.~\eqref{iteration}. 

\begin{lemma} \label{lemma 4.3}
Let $q \in (1,\infty)$ and $k\in\{0,1,2,\ldots\}$. If $\mathbf{v}_{0} \in {C}^{k}(\R^n; \R^n)$ is solenoidal and satisfies
\begin{equation}\label{2024.11.29.2}
\left \|e^{-tA} D^k \mathbf{v}_{0}(\cdot ,t)\right \|_{L^q( B_R)} \le 
\begin{cases}
C R^{-\left(n-1+k-\frac{n}{q}\right)},  \\
C t^{-\left(n-1-\varepsilon+k- \frac{n}{q}\right) /2}. 
    \end{cases}
\end{equation}
and if ${\bf u}_i$ is assumed to satisfy
\begin{equation}\label{2024.12.1.2}
\left \|D^k \mathbf{u}_i(\cdot ,t)\right \|_{L^q( B_R)} \leq      \begin{cases} 
C R^{-\left(n-1+k-\frac{n}{q}\right)}, \\
C t^{-\left(n-1-\varepsilon+k- \frac{n}{q}\right) /2},
    \end{cases}
\end{equation}
then it holds for $\mathbf{u}_{i+1}$ that
\begin{equation}\label{2024.11.29.3}
\left \|D^k \mathbf{u}_{i+1}(\cdot ,t)\right \|_{L^q(B_R)} \leq\begin{cases}
C R^{-\left(n-1+k-\frac{n}{q}\right)},  \\
C t^{-\left(n-1-\varepsilon+k- \frac{n}{q}\right) /2}. 
    \end{cases}
\end{equation}
\end{lemma}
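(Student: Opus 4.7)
The strategy is to apply $D^k$ to both sides of the iteration~\eqref{iteration} for $\mathbf{u}_{i+1}$, take the $L^q(B_R)$-norm, and bound each of the four resulting contributions separately. The initial contribution $e^{-tA}D^k\mathbf{v}_0$ is controlled directly by the hypothesis~\eqref{2024.11.29.2}. The remaining three are spacetime convolutions of $K_1$ or $K_2$ (after distributing a derivative via integration by parts where convenient) against the forcings $\phi\nabla p$, $F$, and the bilinear block $p\phi\,\id+\mathbf{u}_i\otimes\mathbf{u}_i+\mathbf{u}_i\otimes\mathbf{r}+\mathbf{r}\otimes\mathbf{u}_i$. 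Each is estimated by combining the pointwise kernel bounds of Lemmas~\ref{lemma 4.1}--\ref{lemma 4.2} with appropriate $L^p$--$L^q$ convolution inequalities and the source decay established in Sections~\ref{sec: prelim}--\ref{sec: est}.

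For the linear forcings $\phi\nabla p$ and $F$, we use that both are supported in the annulus $B_R\setminus B_{R-1}$ (the $\phi\nabla p$ contribution being supported there after noting $\phi\equiv 1$ on $B_{R-1}$), so they inherit the spatial-decay bounds~\eqref{2024.12.08.4} and~\eqref{2024.12.23.3} of order $R^{-[n+1+k-(n-1)/q]}$ and the temporal-decay bounds~\eqref{2025.1.7.8} and~\eqref{2025.1.7.7}. For the bilinear forcings, H\"older's inequality pairs one factor in $L^q$ (via the inductive hypothesis~\eqref{2024.12.1.2}, and for $\mathbf{r}$ via~\eqref{2024.12.1.1}) with the other in $L^\infty$ through the Sobolev embedding $W^{k_0,q}\hookrightarrow L^\infty$, valid since $q\cdot k_0>n$. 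Convolution against $K_2$ (resp.\ $K_1$ with one derivative absorbed) exchanges a spatial factor of $R^{-2}$ for the extra kernel decay, upgrading the source order $R^{-[n+1+k-(n-1)/q]}$ to the target $R^{-[n-1+k-n/q]}$ required by~\eqref{2024.11.29.3}.

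The main obstacle is twofold. First, the quadratic term $\mathbf{u}_i\otimes\mathbf{u}_i$ contributes a factor of $C^2$ from the inductive hypothesis, which threatens to blow up under iteration; the smallness of $C$ inherited from the assumption $C\leq C_0$ in Theorem~\ref{main thm} is exactly what guarantees $C^2\lesssim C$, so the constant produced at step $i+1$ does not exceed the one at step $i$, closing the induction. Second, for the temporal half of~\eqref{2024.11.29.3} the convolution in time reduces to a Beta-function-type integral $\int_0^t (t-s)^{-a} s^{-b}\,\mathrm{d}s$, which is finite precisely when $a,b<1$ and $a+b<1$; the kernel parameters $\alpha+\beta=n+k$ (for $K_1$) and $n+k+1$ (for $K_2$) must be distributed carefully between spatial and temporal exponents to meet both constraints, and the small margin $\e>0$ in the temporal exponent of~\eqref{2024.12.1.2} and~\eqref{2024.11.29.3} provides precisely the slack needed to ensure integrability at both endpoints $s=0$ and $s=t$ simultaneously.
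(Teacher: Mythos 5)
Your overall architecture (decompose the Duhamel formula, control the Stokes part by the hypothesis, and estimate each forcing by pairing the kernel bounds of Lemmas~\ref{lemma 4.1}--\ref{lemma 4.2} with H\"older and Beta-type time integrals, with the $\varepsilon$-slack ensuring integrability) matches the paper's proof, and your treatment of $F$ and of the genuinely bilinear terms $\mathbf{u}_i\otimes\mathbf{u}_i+\mathbf{u}_i\otimes\mathbf{r}+\mathbf{r}\otimes\mathbf{u}_i$ is essentially the paper's $I_3$ and $I_5$. However, there is a concrete error at the heart of the proposal: you assert that $\phi\nabla p$ is supported in the annulus $B_R\smallsetminus B_{R-1}$ ``after noting $\phi\equiv 1$ on $B_{R-1}$''. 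That observation shows $p\nabla\phi$ is annulus-supported, not $\phi\nabla p$: on $B_{R-1}$ one has $\phi\nabla p=\nabla p$, which does not vanish, and indeed $\|\phi\nabla p\|_{L^q(B_R)}$ carries no decay in $R$ at all (the mass sits near the origin). The same applies to $p\phi\,\id$, which you fold into the ``bilinear block'' and propose to handle by H\"older with one factor in $L^\infty$ via Sobolev embedding --- but this term is linear in $p$ and not small, so that pairing does not apply. Consequently the two pressure terms cannot ``inherit'' the annulus bounds \eqref{2024.12.08.4}, and your scheme as written either misses their bulk contribution or silently assigns them a decay they do not have.

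These two terms are precisely where the real work of the lemma lies, and they are what dictate the final (weaker) rate $R^{-(n-1+k-n/q)}$, as opposed to the stronger rate enjoyed by the annulus-supported forcing $F$. The paper's proof handles them (as $I_2$ and $I_4$) by using the \emph{global pointwise} spacetime decay $|D^{k+1}p(x,t)|\le C(1+|x|)^{-\alpha}(1+t)^{-\beta/2}$, $\alpha+\beta=n+1+k$, from \S\ref{section 1.1}, splitting the spatial convolution into the regions $|x-y|\le|x|/2$ and $|x-y|>|x|/2$, verifying the pure-spatial ($\beta=0$) and pure-temporal ($\alpha=0$) endpoint bounds separately, and interpolating. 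None of this appears in your plan, so the proposal has a genuine gap for the dominant terms. (A secondary, smaller point: the paper closes the iteration for the quadratic term not via $C^2\lesssim C$ but because $I_5$ comes out with the extra factor $R^{-(n-2)}$ relative to the target; your smallness mechanism is plausible but is not what the paper uses.)
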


Before presenting the proof, we remark on the validity of the assumption in Eq.~\eqref{2024.11.29.2}. Indeed, it is proved in  \cite[Theorem 4.3]{MR3288012} that 
$$
\left | e^{-tA} D^k \mathbf{v}_{0}( x) \right | \leq C ( 1 + | x|)^{-\alpha } ( 1 + t)^{-\beta /2}\qquad \text{ for any } \alpha,\beta \geq 0 \text{ with }\alpha + \beta = n+1+k.
$$
Thus, by adapting arguments used for deriving Eqs.~\eqref{2024.11.29.1} and \eqref{2024.12.31.7} we obtain \eqref{2024.11.29.2}.

\begin{proof}[Proof of Lemma~\ref{lemma 4.3}]
Consider as in \cite[Theorem 3.1]{MR1815476} the following:
\begin{small}
\begin{align}
\mathbf{u}_{i+1} (x,t) &= e^{-t A} \mathbf{v}_{0}+\int_0^t\!\int_{\mathbb{R}^n} K_1(x-y,t-s) (\phi\nabla p )(y,s) \,\mathrm{d} y\, \mathrm{d} s-\int_0^t\!\int_{\mathbb{R}^n} K_1(x-y,t-s) F(y,s) \,\mathrm{d} y \,\mathrm{d} s\notag\\
&\quad-\int_0^t\!\int_{\mathbb{R}^n} K_2(x-y,t-s) \Big( p \phi\, \id+ \mathbf{u}_i \otimes \mathbf{u}_i + \mathbf{u}_i  \otimes \mathbf{r} + \mathbf{r} \otimes \mathbf{u}_i \Big)(y,s) \,\mathrm{d} y \, \mathrm{d} s\notag\\
&= e^{-t A} \mathbf{v}_{0}+\int_0^t\!\int_{\mathbb{R}^n} K_1(x-y,t-s) (\phi\nabla p)(y,s) \,\mathrm{d} y \,\mathrm{d} s-\int_0^t\!\int_{\mathbb{R}^n} K_1(x-y,t-s) F(y,s) \,\mathrm{d} y\, \mathrm{d} s\notag\\
&\quad-\int_0^t \!\left\{ \int_{|x - y| \leq |x| /2} + \int_{|x - y| > |x|/2}\right\}K_2(x-y,t-s)  (p \phi\, \id) (y,s) \,\mathrm{d} y\, \mathrm{d} s\notag\\
&\quad-\int_0^t \!\int_{\mathbb{R}^n}  K_2(x-y,t-s) \Big( \mathbf{u}_i \otimes \mathbf{u}_i + \mathbf{u}_i  \otimes \mathbf{r} + \mathbf{r} \otimes \mathbf{u}_i\Big)(y,s) \,\mathrm{d} y \,\mathrm{d} s\notag\\
&=: I_1+I_2+I_3+I_4+I_5.\label{2025.1.17.2}
\end{align}
\end{small}


\smallskip
\noindent
{\bf Estimate for $I_1$.}  It follows directly from the decay assumption~\eqref{2024.11.29.2}.

\smallskip
\noindent
{\bf Estimate for $I_2$.}  Using the estimate of $K_1$ and $\nabla p$ in \eqref{2024.11.18.8} and \eqref{2024.12.08.3}, respectively, and adopting an approach similar to that for \eqref{2024.11.29.5}, one has for any $\e \in (0,1)$ that
\begin{align*}
|I_2(x,t)|&\leq \int_0^t\!\int_{\mathbb{R}^n} \left| K_1(x-y,t-s) (\phi\nabla p )(y,s)\right| \,\mathrm{d} y \,\mathrm{d} s\notag\\
&\leq C \int_0^t \!\left\{\int_{|x - y| \leq |x| /2}+\int_{|y| < |x|/2} \right\}( t - s)^{-\varepsilon} | x - y|^{2\varepsilon - n} ( 1 + |y| )^{-(n-1+2\varepsilon)} ( 1 + s)^{-(1-\varepsilon)}\,\mathrm{d} y\, \mathrm{d} s\notag\\
&\quad +C \int_0^t \!\int_{|y| > \frac{3|x|}{2}}( t - s)^{-(1+\varepsilon)/2} | x - y|^{-(n-1-\varepsilon)} ( 1 + |y| )^{-(n+\varepsilon)} ( 1 + s)^{-(1-\varepsilon)/2}\,\mathrm{d} y \,\mathrm{d} s.
\end{align*}
Let us further \emph{claim} that
\begin{equation}\label{2025.1.7.4}
    |I_2(x,t)| \leq C(1+|x|)^{-\alpha(n-1)} (1+t)^{-\frac{\beta(n-1-\varepsilon)}{2}}
\end{equation}
holds true for any $\alpha, \beta \geq 0$ with $\alpha + \beta = 1$ and for arbitrarily small $\varepsilon>0$.

To see the \emph{claim}~\eqref{2025.1.7.4}, we shall check for $\alpha = 0$ and $\beta = 0$ separately; the remaining cases then hold by interpolation. 

\smallskip
\noindent
\underline{The case $\alpha=0$.} We tackle the time integral as follows: for $\lambda, \mu \geq 0$ with $\lambda+\mu = 1$, one has
\begin{align*}
\int_0^t ( t - s)^{-\lambda} ( 1 + s)^{-\mu}\, \mathrm{d} s &=\left(\int_0^{t/2}+\int_{t/2}^t\right) ( t - s)^{-\lambda} ( 1 + s)^{-\mu} \,\mathrm{d} s\\
&\le \int_0^{t/2} ( t/2)^{-\lambda}  ( 1 + s)^{-\mu} \mathrm{d} s+ \int_{t/2}^t (  t - s)^{-\lambda}  s^{-\mu} \,\mathrm{d} s \\
&= \int_0^{t/2} ( t/2)^{-\lambda}  ( 1 + s)^{-\mu} \,\mathrm{d} s+ \int_0^{t/2} \tilde{s}^{-\lambda} (  t - \tilde{s})^{-\mu}\, \mathrm{d} \tilde{s} \\
&\le \int_0^{t/2} ( t/2)^{-\lambda}  ( 1 + s)^{-\mu} \,\mathrm{d} s+ \int_0^{t/2} \tilde{s}^{-\lambda} ( t/2)^{-\mu}\, \mathrm{d} \tilde{s}\\
&\le 2.
\end{align*}

For the spatial integrals, we estimate over regions $\left\{y:\,|x - y| \leq |x| /2\right\}$ and $\left\{y:\,|x - y| > |x| /2\right\}$, respectively. In the former region one has  $ |x| \leq 2 |y|$, thus 
\begin{align*}
&\int_0^t \!\int_{|x - y| \leq |x| /2} ( t - s)^{-\varepsilon} | x - y|^{2\varepsilon - n} ( 1 + |y| )^{-(n-1+2\varepsilon)} ( 1 + s)^{-(1-\varepsilon)}\,\mathrm{d} y\, \mathrm{d} s\\
&\qquad \leq C |x|^{2\varepsilon} ( 1 + |x|)^{-(n-1+2\varepsilon)} \\
&\qquad \leq C ( 1 +|x|)^{-(n-1)},
\end{align*}
with $\varepsilon$ is a small positive constant. In the latter region, observe that $|x - y|^{-j} \le (|x|/2)^{-j}$, and hence for $|x|\geq 1$ that
\begin{align*} 
&\int_0^t \!\int_{|y| < |x|/2} ( t - s)^{-\varepsilon} | x - y|^{2\varepsilon - n} ( 1 + |y| )^{-(n-1+2\varepsilon)} ( 1 + s)^{-(1-\varepsilon)}\,\mathrm{d} y \,\mathrm{d} s \\
&\qquad\leq C |x|^{-(n-2\varepsilon)} ( 1 + |x|)^{1-2\varepsilon}\\
&\qquad\leq C ( 1 +|x|)^{-(n-1)}
\end{align*}
and
\begin{align*} 
&\int_0^t \!\int_{|y| > \frac{3|x|}{2}}( t - s)^{-(1+\varepsilon)/2} | x - y|^{-(n-1-\varepsilon)} ( 1 + |y| )^{-(n+\varepsilon)} ( 1 + s)^{-(1-\varepsilon)/2}\,\mathrm{d} y \,\mathrm{d} s\\
&\qquad\leq C |x|^{-(n-1-\varepsilon)} ( 1 + |x|)^{-\varepsilon} \\
&\qquad \leq C ( 1 +|x|)^{-(n-1)}.
\end{align*}
The proof for the case $\alpha=0$ is now complete. \\

\smallskip
\noindent
\underline{The case $\beta = 0$.} We divide the time interval into $[0,t/2]$ and $[t/2,t]$. Without loss of generality, assume $t\geq 1$. By choosing $\varepsilon$ suitably small, one has that
\begin{align*} 
&\int_0^{t/2}\!\int_{\mathbb{R}^n} |K_1(x-y,t-s) (\phi\nabla p)(y,s)| \,\mathrm{d} y\, \mathrm{d} s\\
&\qquad =\int_0^{t/2}\!\int_{B_R} |K_1(x-y,t-s) (\phi\nabla p )|(y,s) \,\mathrm{d} y \,\mathrm{d} s\\
&\qquad\leq\int_0^{t/2} \!\int_{B_R} ( t - s)^{-(n - 1-\varepsilon) /2}( t - s)^{-(1+\varepsilon) /2} ( 1 + |y| )^{-(n+\varepsilon)} ( 1 + s)^{-(1-\varepsilon)/2}\,\mathrm{d} y \,\mathrm{d} s \\
&\qquad\leq\left(\frac{t}{2}\right)^{-(n - 1-\varepsilon) /2} \int_0^{t/2} \!\int_{B_R} ( t - s)^{-(1+\varepsilon)/2} ( 1 + |y| )^{-(n+\varepsilon)} ( 1 + s)^{-(1-\varepsilon)/2}\,\mathrm{d} y \,\mathrm{d} s \\
&\qquad\leq C ( 1+t)^{-(n - 1-\varepsilon) /2}.
\end{align*}

On the other hand, by kernel estimate in \eqref{2024.11.18.7} with $q = 1$, pointwise estimate of $\nabla p$ in \eqref{2024.12.08.3}, and Young’s inequality, we have
\begin{align*} 
\int_{t/2}^t\!\int_{\mathbb{R}^n} |K_1(x-y,t-s) ( \phi\nabla p)(y,s) |\,\mathrm{d} y\, \mathrm{d} s &\le \int_{t/2}^t ( 1 + s)^{-(1 + n)/2 }\mathrm{d} s \\
&\leq C ( 1 + t)^{-(n - 1) /2}.
\end{align*}
This completes the proof of the case $\beta=0$. Hence, the pointwise estimate~\eqref{2025.1.7.4} follows from an interpolation argument.

The inequality~\eqref{2025.1.7.4} together with the calculations in \eqref{2024.11.29.5} and \eqref{2024.12.30.6} yield that, for $ 1 \le q <\infty$ and $\e>0$ arbitrarily small,
\begin{equation}\label{2024.12.01.3}
\|I_2(\cdot ,t)\|_{L^q(B_R)}
\le
\begin{cases} C R^{-\left(n-1-\frac{n}{q}\right)},\\
C t^{-\left(n-1-\varepsilon- \frac{n}{q}\right) /2}.
\end{cases}
\end{equation}

\smallskip
\noindent
{\bf Estimate for $I_3$.} To this end, we recall Eq.~\eqref{2024.11.29.4} for the estimate for $F$ (which is supported on $B_R \smallsetminus B_{R-1}$) and apply H\"{o}lder's inequality to deduce that 
\begin{align}
|I_3|&= \left|\int_0^t\!\int_{\mathbb{R}^n} K_1(x-y,t-s) F(y,s) \,\mathrm{d} y \,\mathrm{d} s\right|\nonumber\\
&=\left|\int_0^t\!\int_{B_R \smallsetminus {B}_{R - 1}} K_1(x-y,t-s) F(y,s) \,\mathrm{d} y \,\mathrm{d} s\right|\notag\\
&\le C \int_0^t \|K_1(x-\cdot,t-s)\|_{L^{\tilde{q}'}(B_R \smallsetminus {B}_{R - 1})}\|F(\cdot,s)\|_{L^{\tilde{q}}( B_R \smallsetminus {B}_{R - 1})}\,\mathrm{d} s\notag\\
&\le C \int_0^t R^{-[n+1-(n-1)/\tilde{q}]} \left[\left( \int_{|x - y| \leq |x| /2} + \int_{|x - y| > |x|/2}\right) | x - y|^{-\tilde{q}'(n-2-\varepsilon)} \,\mathrm{d} y \right]^{1/\tilde{q}'} (t-s)^{-(1+\varepsilon/2)}\,\mathrm{d} s \notag.
\end{align}
Here, $1/\tilde{q}+1/\tilde{q}'=1$ for $\tilde{q} \in (1,\infty)$, and the arbitrarily small parameter $\e>0$ is introduced to ensure the convergence of the integral $\int_0^t (t-s)^{-(1+\varepsilon/2)}\,\mathrm{d} s$. A direct adaptation of the estimates for $I_2$ leads to
\begin{align*}
|I_3(x,t)|\leq C |x|^{-(n-2-\varepsilon)+\frac{n}{\tilde{q}'}} R^{-\left(n+1-\frac{n-1}{\tilde{q}}\right)}.
\end{align*}
Thus 
\begin{align}
\|I_3(\cdot ,t)\|_{L^q(B_R)} \le C R^{-\left(n-1-\frac{n}{q}\right)} \quad \text{ for } 1 < q < \infty.
\end{align}

The temporal decay of $I_3$ is estimated as follows:
\begin{align*}
|I_3|&= \left|\int_0^t\!\int_{\mathbb{R}^n} K_1(x-y,t-s) F(y,s) \,\mathrm{d} y\, \mathrm{d} s\right|\\
&=\left|\int_0^{t/2}\!\int_{\mathbb{R}^n} K_1(x-y,t-s) F(y,s) \,\mathrm{d} y \,\mathrm{d} s\right|+\left|\int_{t/2}^t\!\int_{\mathbb{R}^n} K_1(x-y,t-s) F(y,s) \,\mathrm{d} y \,\mathrm{d} s\right|\\
&=:I_{3,1}+I_{3,2}.
\end{align*}
Thanks to the pointwise estimate~\eqref{2024.11.18.8} for $K_1$ and the spacetime estimates~\eqref{2025.1.7.7}-\eqref{2025.1.7.7} for $F$,
\begin{align*}
I_{3,1}&= \left|\int_0^{t/2}\!\int_{\mathbb{R}^n} K_1(x-y,t-s) F(y,s) \,\mathrm{d} y \,\mathrm{d} s\right|\notag\\
&\le C \int_0^{t/2} \|K_1(x-\cdot)\|_{L^{q}( \mathbb{R}^n)}\|F(\cdot ,s)\|_{L^{q'}( B_R \smallsetminus {B}_{R - 1})}\,\mathrm{d} s\notag\\
&\le C \left\{\int_0^{t/2}(t-s)^{-(n-1-\varepsilon- \frac{n}{q}) /2}(t-s)^{-1/2}s^{-1/2} \,\mathrm{d} s\right\}\\
&\quad\quad \times R^{-(n-n/q')} \left[\left( \int_{|x - y| \leq |x| /2} + \int_{|x - y| > |x|/2}\right) | x - y|^{-(q\varepsilon+n)} \,\mathrm{d} y \right]^{1/q}  \\
&\le C  t^{-\left(n-1-\varepsilon- \frac{n}{q}\right) /2} R^{-\left(n-\frac{n}{q'}\right)} |x|^{-\varepsilon}
\end{align*}
and
\begin{align*}
I_{3,2}&= \left|\int_{t/2}^t\!\int_{\mathbb{R}^n} K_1(x-y,t-s) F(y,s) \,\mathrm{d} y \,\mathrm{d} s\right|\notag\\
&\le C \int_{t/2}^t \|K_1(x-\cdot)\|_{L^{q'}( \mathbb{R}^n)}\|F(\cdot ,s)\|_{L^{q}( B_R \smallsetminus {B}_{R - 1})}\,\mathrm{d} s\notag\\
&\le C \int_{t/2}^t  s^{-\left(n+1- \frac{n}{q}\right) /2} \left[\left( \int_{|x - y| \leq |x| /2} + \int_{|x - y| > |x|/2}\right) | x - y|^{-q'n} \,\mathrm{d} y \right]^{1/q'} \,\mathrm{d} s \notag\\
&\le C t^{-(n-1- \frac{n}{q}) /2} |x|^{-n/q}.
\end{align*}
We thus conclude that
\begin{align}\label{I3, May25}
\|I_3(\cdot ,t)\|_{L^q( B_R )} \leq C t^{-\left(n-1-\varepsilon- \frac{n}{q}\right) /2} \quad \text{ for } 1 < q < \infty \text{ and arbitrarily small } \e>0.
\end{align}

\smallskip
\noindent
{\bf Estimate for $I_4$.}  By using the pointwise decay properties of $p$ in \eqref{2024.12.08.2} and the estimates for $\nabla e^{-t A} \mathbb{P}$ in \eqref{2024.11.18.11}, and employing an identical approach to that used for $I_2$, we infer that
\begin{align*}
|I_4| &=\left|\int_0^t \!\left( \int_{|x - y| \leq |x| /2} + \int_{|x - y| > |x|/2}\right)K_2(x-y,t-s) ( p \phi\, \id)(y,s) \,\mathrm{d} y \,\mathrm{d} s\right|\\
&\leq C(1+|x|)^{-\alpha(n-1)} (1+t)^{-\frac{\beta(n-1-\varepsilon)}{2}},
\end{align*}
for any $\alpha, \beta \geq 0$ with $\alpha + \beta = 1$ and arbitrarily small $\varepsilon>0$.
 Therefore, for $ 1 \le q < \infty$,
\begin{equation}\label{2025.1.17.3}
\|I_4(\cdot ,t)\|_{L^q(B_R)} \le \begin{cases}
C R^{-\left(n-1-\frac{n}{q}\right)},\\
C t^{-\left(n-1-\varepsilon- \frac{n}{q}\right) /2}.
\end{cases}
\end{equation} 

\smallskip
\noindent
{\bf Estimate for $I_5$.}  As for $I_5 =\left|\int_0^t \!\int_{\mathbb{R}^n}  K_2(x-y,t-s) ( \mathbf{u}_i \otimes \mathbf{u}_i + \mathbf{u}_i  \otimes \mathbf{r} + \mathbf{r} \otimes \mathbf{u}_i )(y,s) \,\mathrm{d} y \,\mathrm{d} s\right|$, note that by the estimates for $\mathbf{r}$ in \eqref{2024.12.1.1} and $\mathbf{u}_i$ in \eqref{2024.12.1.2}, we have that
\begin{equation*}
\|(\mathbf{u}_i \otimes \mathbf{u}_i + \mathbf{u}_i  \otimes \mathbf{r} + \mathbf{r} \otimes \mathbf{u}_i )(\cdot ,t)\|_{L^{q}( B_R)} \le
\begin{cases}
C R^{-(2n-2-n/q)},\\
C t^{-(2n-2-\varepsilon- \frac{n}{q}) /2}.
\end{cases}
\end{equation*}
Analogous arguments to those for $I_3$ yield that
\begin{equation}\label{2024.12.01.4}
\|I_5(\cdot ,t)\|_{L^q( B_R)}  \le \begin{cases} 
C R^{-(2n-3-n/q)}, \\
C t^{-(2n-3-\varepsilon- \frac{n}{q}) /2}
\end{cases}
\end{equation} 
for any $1 < q < \infty$.

\smallskip 

Combining the estimates in Eqs.~\eqref{2024.12.01.3}-\eqref{2024.12.01.4} and the decay assumption for the homogeneous Stokes system in Eq.~\eqref{2024.11.29.2}, we may now complete the proof for the case $k=0$.

For the case $k \ge 1$, we repeat the estimates for the $k=0$ case above and apply the kernel estimates in Lemmas~\ref{lemma 4.1} and \ref{lemma 4.2} for higher order derivatives. More precisely, we just replace the estimates for $p$, $\mathbf{r}$, and $F$ themselves in the above arguments with the derivative estimates for $p$ in \eqref{2024.12.08.3}-\eqref{2025.1.7.8}, for $\mathbf{r}$ in \eqref{2024.12.1.1}, and for $F$ in \eqref{2024.12.23.3}-\eqref{2025.1.7.7}. Then, together with the decay assumptions specified in Eqs.~\eqref{2024.11.29.2}–\eqref{2024.12.1.2}, we complete the proof of Lemma~\ref{lemma 4.3}.   \end{proof}


\begin{remark}
In the above estimates~\eqref{I3, May25}--\eqref{2024.12.01.4} for $I_3$--$I_5$, the constants are chosen independently of $\e$. More precisely, for a sufficiently small parameter $\e_0>0$, the above estimates hold for constants depending only on $\e_0$, $k$, $q$, $R_0$,  and $n$ whenever $\e \in ]0,\e_0[$. We shall fix such an $\e_0>0$ once and for all.  
\end{remark}

\section{Proof of Main Theorem~\ref{main thm}}\label{sec: proof}
With the above preparations at hand, we are ready to conclude our main result. Throughout the proof below, we write $\left({\bf u}_i, {\bf u}_{i+1}\right) = \left(\underline{\mathbf{u}},\mathbf{u}\right)$ for simplicity.

\begin{proof}[Proof of Theorem~\ref{main thm}] 
Recall that the desired solution $(\mathbf{w}, \pi)$ of Eq.~\eqref{2024.11.2.1} on $B_R$ is given by
$$
\mathbf{w} := \phi \mathbf{v} + \mathbf{v}_{c} + \tilde{\mathbf{v}} \quad \text{ and } \quad \pi := \phi p + \bar{p},
$$
where $(\mathbf{v},p)$ is the NSE solution on $\R^n$, $\phi$ is a given cutoff function, and ${\bf v}_{c}$ is the correction term accounting for the non-divergence-free property of $\phi {\bf v}$. It remains to prove the existence of $(\mathbf{w}, \pi)$ solving for Eq.~\eqref{2024.11.2.1}, together with desired decay estimates for $({\bf v}-{\bf w}, p-\pi)$. 

Throughout this proof, fix an arbitrary $q \in (1,\infty)$ once and for all. The uniqueness of strong solution is classical in the literature, so we focus only on the existence issue.

We apply a standard fixed point argument. The function space in consideration is the following spacetime-weighted Sobolev space $\mathcal{X}$ motivated by Schonbek \cite{MR3288012}: define 
\begin{equation}\label{def, X}
\mathcal{X}\equiv \mathcal{X}(k,q,R,n):= \left\{{\bf u} \in W^{k,q}_0(B_R; \R^n):\, {\rm div}({\bf u})=0 \text{ and } \|{\bf u}\|_{\mathcal{X}} < \infty \right\},
\end{equation}
where 
\begin{align*}
\|\mathbf{u}\|_{\mathcal{X}}= \mathop{\sup }\limits_{\substack{{\alpha ,\beta \geq 0,\;\alpha + \beta = 1;}\\{m = 0,1,2,3,\ldots k;\; t \in [0,T]} }} R^{\alpha \left(n-1+m-\frac{n}{q}\right)}\, t^{\beta \left(n-1-\varepsilon+m- \frac{n}{q}\right) /2} \left\|D^m \mathbf{u}(\cdot,t) \right\|_{L^q( B_R)}. 
\end{align*}

Consider the mapping $\T : \mathcal{X}  \rightarrow \mathcal{X}$:
\begin{align}\label{T def}
\T \underline{\mathbf{u}} \equiv \mathbf{u} &:=    e^{-t A} \mathbf{v}_{0}-\int_0^t e^{-(t-s) A} \mathbb{P} (F-\phi\nabla p)(s) \,\mathrm{d} s\nonumber\\
&\qquad -\int_0^t \nabla e^{-(t-s) A} \mathbb{P}\, \Big\{p \phi \,\id+ \underline{\mathbf{u}} \otimes \underline{\mathbf{u}} + \underline{\mathbf{u}}  \otimes \mathbf{r} + \mathbf{r} \otimes \underline{\mathbf{u}}\Big\}(s) \,\mathrm{d} s.
\end{align} 
We \emph{claim} the following bounds: 
\begin{align*}
&\left|\int_0^t \nabla e^{-(t-s) A} \mathbb{P}\, ( \mathbf{r} \otimes \underline{\mathbf{u}})(s) \,\mathrm{d} s\right| \leq C_1 R^{-1}\|\mathbf{r}\|_{\mathcal{X}}\|\underline{\mathbf{u}}\|_{\mathcal{X}},\\
&\left|\int_0^t \nabla e^{-(t-s) A} \mathbb{P}\, (\underline{\mathbf{u}}\otimes \mathbf{r} )(s) \,\mathrm{d} s\right| \leq C_1 R^{-1}\|\underline{\mathbf{u}}\|_{\mathcal{X}}\|\mathbf{r}\|_{\mathcal{X}},\\
&\left|\int_0^t \nabla e^{-(t-s) A} \mathbb{P}\, (\underline{\mathbf{u}}\otimes \underline{\mathbf{u}})(s) \,\mathrm{d} s\right| \leq C_1 R^{-1}\|\underline{\mathbf{u}}\|_{\mathcal{X}}^2,
\end{align*}
from which it follows that
\begin{align}
\|\T \underline{\mathbf{u}}\|_{\mathcal{X}} \le& \left\|e^{-t A} \mathbf{v}_{0}\right\|_{\mathcal{X}}+ \left\|\int_0^t e^{-(t-s) A} \mathbb{P} (F-\phi\nabla p )(s) \,\mathrm{d} s\right\|_{\mathcal{X}}+\left\|\int_0^t \nabla e^{-(t-s) A} \mathbb{P}\, (p \phi \id) (s) \,\mathrm{d} s\right\|_{\mathcal{X}}\notag\\
&+  C_1 R^{-1}\|\mathbf{r}\|_{\mathcal{X}}\|\underline{\mathbf{u}}\|_{\mathcal{X}}+C_1R^{-1}\|\underline{\mathbf{u}}\|_{\mathcal{X}}^2.\label{2025.1.20}
\end{align}
The constant $C_1$, as in Remark~\ref{remark on const}, is independent of $R$ and $t$.

To see the \emph{claim}, let us only prove the first inequality, as the remaining two are analogous. 
Fix $m \in \{0,1,\ldots,k\}$ for $qm>n$ as in the definition~\eqref{def, X} for the space $\mathcal{X}\equiv \mathcal{X}(k,q,R,n)$, and fix any $q,q' \in (1,\infty)$ with $\frac{1}{q}+\frac{1}{q'}=1$. Noticing that $\mathbf{r} \otimes \underline{\mathbf{u}}$ is supported in $B_R$, and using the kernel estimate~\eqref{2024.11.18.11} for $K_2$ in Lemma~\ref{lemma 4.2}, we argue as for $I_3$ above to obtain that
\begin{align*}
&\left|\int_0^t \nabla e^{-(t-s) A} \mathbb{P}\,  D^m ( \mathbf{r} \otimes \underline{\mathbf{u}})(s) \,\mathrm{d} s\right|\notag\\
&\qquad=\left|\int_0^t  \!\int_{\mathbb{R}^n}  K_2(x-y,t-s)  D^m ( \mathbf{r} \otimes \underline{\mathbf{u}})(y,s) \,\mathrm{d} y \,\mathrm{d} s\right|\\
&\qquad\leq C \int_0^t \|K_2(x-\cdot,t-s)\|_{L^{q'}( B_R )}\|D^m ( \mathbf{r} \otimes \underline{\mathbf{u}})(\cdot,s)\|_{L^{q}( B_R )}\mathrm{d} s\notag\\
&\qquad\leq C \int_0^t \|D^m ( \mathbf{r} \otimes \underline{\mathbf{u}})(\cdot,s)\|_{L^{q}( B_R )}   \nonumber\\
&\qquad\qquad\qquad\times\left[\left( \int_{|x - y| \leq |x| /2} + \int_{|x - y| > |x|/2}\right) | x - y|^{-q'(n-1-\varepsilon)} \,\mathrm{d} y \right]^{1/q'} (t-s)^{-(1+\varepsilon/2)}\mathrm{d} s \notag\\
&\qquad\leq C \|D^m ( \mathbf{r} \otimes \underline{\mathbf{u}})(\cdot,s)\|_{L^{q}( B_R )} |x|^{-(n-1-\varepsilon)+n/q'}\\
&\qquad\leq C \Big\{ \|\mathbf{r}(\cdot,s)\|_{L^\infty} \|\underline{\mathbf{u}}(\cdot,s)\|_{W^{m,q}}+ \|\underline{\mathbf{u}}(\cdot,s)\|_{L^\infty} \|\mathbf{r}(\cdot,s)\|_{W^{m,q}}\Big\} |x|^{-(n-1-\varepsilon)+n/q'}\\
&\qquad\leq C \|\mathbf{r} (\cdot,s)\|_{W^{m,q}( B_R )} \|\underline{\mathbf{u}}(\cdot,s)\|_{W^{m,q}( B_R )} |x|^{-(n-1-\varepsilon)+n/q'}.
\end{align*} 
The penultimate inequality follows from the Sobolev--Gagliardo--Nirenberg--Moser inequality, and the final line holds by the Sobolev embedding $W^{m,q}\emb L^\infty$ over domains in $\R^n$ for $m>\frac{n}{q}$.

As a consequence, by setting $\varepsilon=1/q$ one deduces
\begin{align*}
&\left\|\int_0^t \nabla e^{-(t-s) A} \mathbb{P}\,  D^m ( \mathbf{r} \otimes \underline{\mathbf{u}})(s) \,\mathrm{d} s\right\|_{\mathcal{X}}\\
&\qquad \leq C R^{-\left(n-1+m-\frac{n}{q}\right)} \|\mathbf{r}\|_{\mathcal{X}}\|\underline{\mathbf{u}}\|_{\mathcal{X}} \cdot R\\
&\qquad \leq C_1 R^{-1}\|\mathbf{r}\|_{\mathcal{X}}\|\underline{\mathbf{u}}\|_{\mathcal{X}},
\end{align*}
thereby the \emph{claim} and hence  \eqref{2025.1.20} follows.

We now continue from Eq.~\eqref{2025.1.20}. The first three terms on the right-hand side of it correspond to $I_2, I_3$, and $I_4$ (recall Eq.~\eqref{2025.1.17.2}), and are estimated in Eqs.~\eqref{2024.12.01.3}-\eqref{2025.1.17.3}. For the fourth term, we bound it by 
\begin{align*}
\|\mathbf{r}\|_{\mathcal{X}} \leq C R^{-2},
\end{align*}
thanks to the estimate for $\mathbf{r}$ in Eq.~\eqref{2024.12.1.1}. Hence,
\begin{align*}
\|\T \underline{\mathbf{u}}\|_{\mathcal{X}} \le \left\|e^{-t A} \mathbf{v}_{0}\right\|_{\mathcal{X}}+  C_3+  C_2 R^{-3}\|\underline{\mathbf{u}}\|_{\mathcal{X}}+C_1 R^{-1}\|\underline{\mathbf{u}}\|_{\mathcal{X}}^2,
\end{align*}
with $C_i$ depending only on $q,k,$ and $n$.

By selecting sufficiently small initial data such that
$$
\left\|e^{-t A} 
\mathbf{v}_{0}\right\|_{\mathcal{X}} < \frac{(1-C_2R^{-3})^2}{4 C_1R^{-1}}-C_3,
$$
we have that $\|\T \underline{\mathbf{u}}\|_{\mathcal{X}} \le L$ whenever \begin{align} \label{2025.1.17.1}
\|\underline{\mathbf{u}}\|_{\mathcal{X}}\le L := \frac{1-C_2R^{-3}-\sqrt{(1-C_2R^{-3})^2-4C_1R^{-1}(\left\|e^{-t A} \mathbf{v}_{0}\right\|_{\mathcal{X}} +C_3)}}{2 C_1R^{-1}}.
\end{align}

We may now conclude that $\T$ is a contraction. The nonlinear terms are treated similarly as for \eqref{2025.1.20}. Indeed, we have
\begin{align*}
&\left\|\T\underline{\mathbf{u}}_1-\T\underline{\mathbf{u}}_2\right\|_{\mathcal{X}}\nonumber\\
&\qquad =\left\|\int_0^t \nabla e^{-(t-s) A} \mathbb{P}\, \Big( (\underline{\mathbf{u}}_1+\underline{\mathbf{u}}_2) \otimes (\underline{\mathbf{u}}_1-\underline{\mathbf{u}}_2) + (\underline{\mathbf{u}}_1-\underline{\mathbf{u}}_2)  \otimes \mathbf{r} + \mathbf{r} \otimes (\underline{\mathbf{u}}_1-\underline{\mathbf{u}}_2) \Big)(s) \,\mathrm{d} s \right\|_{\mathcal{X}} \nonumber\\
&\qquad \leq C_1 R^{-1} (\|\underline{\mathbf{u}}_1\|_{\mathcal{X}}+\|\underline{\mathbf{u}}_2\|_{\mathcal{X}}+\|\mathbf{r}\|_{\mathcal{X}})\|\underline{\mathbf{u}}_1-\underline{\mathbf{u}}_2\|_{\mathcal{X}}\nonumber\\
&\qquad \leq  \left(2 C_1R^{-1} L+C_2R^{-3}\right) \|\underline{\mathbf{u}}_1-\underline{\mathbf{u}}_2\|_{\mathcal{X}}\nonumber\\
&\qquad < \|\underline{\mathbf{u}}_1-\underline{\mathbf{u}}_2\|_{\mathcal{X}},
\end{align*}
where the last line follows from \eqref{2025.1.17.1}. Therefore, applying the Banach fixed point theorem, we deduce the existence of a solution $\tilde{\mathbf{v}}$ to Eq.~\eqref{2024.11.2.1} with 
\begin{equation}\label{2024.12.30.3}
\left\|D^k \tilde{\mathbf{v}}(\cdot,t) \right\|_{L^q( B_R )} \leq \begin{cases}  
C R^{-\left(n-1+k-\frac{n}{q}\right)},\\
C t^{-\left(n-1-\varepsilon+k- \frac{n}{q}\right) /2} \qquad\text{for any $1 < q < \infty$.}
\end{cases}
\end{equation} 

For $\bar{p}$, we employ the same approach as in \S\ref{section 1.1}: taking divergence to both sides of Eq.~\eqref{2025.01.03.1} and exploiting the fundamental solution for Laplacian, one obtains that
\begin{align*}
&\bar{p}(x,t)=C(n) \int_{\mathbb{R}^{n}} \frac{1}{|x-y|^{n-2}}\left[\operatorname{div}(F-\phi\nabla p )+\operatorname{div}\operatorname{div}(p \phi \id+ \tilde{\mathbf{v}} \otimes \tilde{\mathbf{v}} + \tilde{\mathbf{v}}  \otimes \mathbf{r} + \mathbf{r} \otimes \tilde{\mathbf{v}}) \right](y,t) \,\mathrm{d}y.
\end{align*}
Employing the calculations for $I_3$ outlined in Lemma \ref{lemma 4.3}, and combining it with the estimates for $p$ in \eqref{2024.12.08.4}-\eqref{2025.1.7.8}, $\mathbf{r}$ in \eqref{2024.12.1.1}, $F$ in \eqref{2024.12.23.3}-\eqref{2025.1.7.7}, and $\tilde{\mathbf{v}}$ in \eqref{2024.12.30.3}, we then conclude that
\begin{equation*}
\|\bar{p } (\cdot ,t)\|_{L^q( B_R )}=\|\bar{p } (\cdot ,t)\|_{L^q( B_R \smallsetminus {B}_{R - 1})} \leq 
\begin{cases} 
C R^{-\left(n-\frac{n-1}{q}\right)},\\
C t^{-\left(n- \frac{n}{q}\right) /2}.
\end{cases}
\end{equation*} 
Similarly, for any $1 < q < \infty$,
\begin{equation} \label{2024.12.30.4}
\left\|D^k \bar{p } (\cdot ,t)\right\|_{L^q( B_R )} \leq \begin{cases}
C R^{-\left(n+k-\frac{n-1}{q}\right)} \\
C t^{-\left(n+k- \frac{n}{q}\right)/2}.
\end{cases}
\end{equation}

It is clear by construction that $(\mathbf{v},p)$ and $(\mathbf{w},\pi)$ coincide within $B_{R-1}$.
Applying the triangle inequality along with the estimates for $\phi \mathbf{v}$ in \eqref{2025.1.13.1}, $\mathbf{v}_{c}$ in \eqref{2024.12.30.2}, $\tilde{\mathbf{v}}$ in \eqref{2024.12.30.3}, $p$ in \eqref{2024.12.08.4}-\eqref{2025.1.7.8}, and $ \bar{p}$ in \eqref{2024.12.30.4}, we arrive at
\begin{equation*}
\left\| D^k ( \mathbf{v}- \mathbf{w} ) (\cdot ,t)\right\|_{L^q ( B_R) }\leq \begin{cases}
 C R^{-\left(n-1+k-\frac{n}{q}\right)},\\
 C t^{-\left(n-1-\varepsilon+k- \frac{n}{q}\right)/2},
\end{cases}
\end{equation*}
and 
\begin{equation*}
\left\| D^k (p-\pi) (\cdot ,t)\right\|_{L^q ( B_R) }\leq \begin{cases}
C R^{-\left(n+k-\frac{n-1}{q}\right)},\\
C t^{-\left(n+k- \frac{n}{q}\right) /2}.
\end{cases}
\end{equation*}
From here, Eq.~\eqref{2024.10.31.2} and hence Theorem~\ref{main thm} follows via an interpolation argument.    \end{proof}

\appendix
\section{Technical Lemmas}
In the Appendix, we present the Bogovskiĭ Lemma used for the correction term $\mathbf{v}_{c}$ and provide a detailed proof for the estimates of kernel functions. 

The following result, known as the Bogovskiĭ Lemma, is a classical result in the literature (Bogovskiĭ \cite{MR0553920, MR0631691}; see also Galdi \cite[Lemma III.3.1]{MR2808162}).
\begin{lemma} \label{the Bogovskiĭ lemma}
Let $\Omega \subset {\mathbb{R}}^{n}(n \ge 2)$ be a star-shaped domain with respect to ${B}_{r}$ --- that is, for all $x \in \Omega$ and $y \in {B}_{r}$, the line segment from $x$ to $y$ lies in $\Omega$. Then for any $q \in (1,\infty)$ and $f \in {W}^{k - 1,q} ( \Omega )$ with $\int_\Omega f = 0$, there exists $\mathbf{v} \in {W}_0^{k,q} ( {\Omega ;{\mathbb{R}}^{n}})$ such that $\operatorname{div} \mathbf{v} = f$. Moreover, 
\begin{align}
\| \mathbf{v}\|_{{W}^{k,q} ( \Omega ) } \le C \| f\|_{{W}^{k - 1,q}( \Omega ) },
\end{align}
where the constant $C$ depending on $n,k,q$ and the domain $\Omega$.
\end{lemma}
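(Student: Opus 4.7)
The plan is to construct $\mathbf{v}$ explicitly via the classical Bogovskiĭ formula and then reduce the regularity estimate to Calderón--Zygmund theory. First I would fix once and for all a nonnegative cutoff $\omega \in C_c^\infty(B_r)$ with $\int_{B_r} \omega = 1$, and set
$$
\mathbf{v}(x) := \int_\Omega \mathbf{N}_\omega(x,y)\,f(y)\,\mathrm{d}y,
$$
where $\mathbf{N}_\omega$ is Bogovskiĭ's explicit kernel, constructed by ``streaming'' mass along the rays connecting points of $B_r$ to points $x \in \Omega$. The star-shapedness hypothesis with respect to $B_r$ guarantees that each such ray remains inside $\Omega$, so that the integral is well defined and $\mathbf{v}$ has support inside $\overline{\Omega}$. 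Once the regularity is obtained this will yield $\mathbf{v}\in W^{k,q}_0(\Omega;\mathbb{R}^n)$.

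Next I would verify the divergence identity $\operatorname{div}\mathbf{v} = f$ directly. Differentiating under the integral and performing a change of variables along the ray structure, one obtains a delta-type term that produces $f$ at $x$, plus a spurious mass term proportional to $\omega(x)\int_\Omega f$; the latter vanishes thanks to the zero-mean hypothesis $\int_\Omega f = 0$. This is precisely where the compatibility condition on $f$ is used.

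For the $L^q$ estimate I would show that, after one integration by parts, the kernel of $\nabla\mathbf{v}$ decomposes as a principal-value Calderón--Zygmund kernel $K(x,x-y)$ that is homogeneous of degree $-n$ in $x-y$, smooth off the diagonal, and has zero mean on spheres, \emph{plus} an explicitly bounded remainder supported where $\omega$ is smooth. The classical Calderón--Zygmund theorem then gives $\|\nabla\mathbf{v}\|_{L^q(\Omega)} \le C\,\|f\|_{L^q(\Omega)}$ for $1<q<\infty$, and Poincaré's inequality (applicable since $\mathbf{v}$ vanishes outside $\overline{\Omega}$) promotes this to $\|\mathbf{v}\|_{W^{1,q}_0(\Omega)} \le C\|f\|_{L^q(\Omega)}$. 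To reach $W^{k,q}$ I would iterate: differentiating the representation $k$ times and integrating by parts transfers $k-1$ derivatives onto $f$ (boundary contributions vanish because of the support properties of $\omega$), reducing the analysis to a Calderón--Zygmund operator acting on $D^{k-1}f$ plus lower-order smooth terms. Iterating produces
$$
\|\mathbf{v}\|_{W^{k,q}(\Omega)} \le C\,\|f\|_{W^{k-1,q}(\Omega)}.
$$

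The main obstacle is the Calderón--Zygmund step: verifying rigorously the cancellation condition $\int_{\mathbb{S}^{n-1}} K(x,\sigma)\,\mathrm{d}S(\sigma)=0$ for the differentiated principal-value kernel, and tracking the dependence of the constant $C$ on the star-shaped parameters of $\Omega$ (reflected in $\mathrm{supp}\,\omega$ and the radius $r$). The higher-order iteration is conceptually routine but notationally heavy, since one has to control commutators between the differential operators and the explicit kernel; this is what forces the dependence of $C$ on $n,k,q$, and on the geometry of $\Omega$ stated in the lemma.
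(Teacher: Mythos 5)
Your plan is correct and coincides with the paper's treatment: the paper states this as a classical result (citing Bogovskiĭ and Galdi, Lemma~III.3.1) and displays exactly the explicit ray-streaming kernel you propose in Eq.~\eqref{2024.11.7}, without carrying out the estimates. Your outline --- the divergence identity using the zero-mean condition, the reduction of $\nabla \mathbf{v}$ to a variable-coefficient Calder\'on--Zygmund kernel homogeneous of degree $-n$ with vanishing spherical mean plus a bounded remainder, and the iteration for higher derivatives --- is precisely the standard argument behind that citation.
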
 
In fact, letting $g \in {C}_{0}^{\infty } ( {B}_{1} )$ be a function such that $\int g = 1$, the vector field
\begin{align} \label{2024.11.7}
\mathbf{v} (x) := \int_{\Omega } f(y) \left( {\frac{x - y}{{| x - y|}^{n}} \int _{| x - y| }^{\infty } g\left( y + z\frac{x - y}{| x - y| }\right) {z}^{n-1} \mathrm{\;d}z}\right) \mathrm{d}y 
\end{align}
satisfies the assertion of the lemma. The result remains valid even if the domain $\Omega$ is not star-shaped, thanks to a partition of unity argument as in \cite{MR4316125}. 

We now turn to the potential estimates for integral kernels. Throughout the proof below, the constant $C$ depends on the dimension $n$ and the domain, while $c$ denotes a universal constant. We shall assume Lemma~\ref{lemma 4.2} (see \cite [Lemma 2.1] {MR1815476} and \cite [Corollary 4.1] {MR3288012} for a proof) and deduce Lemma~\ref{lemma 4.1}.

\begin{proof}[Proof of Lemma~\ref{lemma 4.1}]
The case $k=0$ in \eqref{2024.11.18.8} follows from an adaptation of \cite [Lemma 2.1] {MR1815476}. For the sake of completeness and clarity, we present the details here. Also, we shall prove for the case $k \ge 1$ by way of using  the inequality $
\left|D^k K_1(x,t)\right| \leq C \left|D^{k-1} K_2(x,t)\right|$, where $K_1(x,t)$ and $K_2(x,t)$ are the kernel functions of the linear operators $e^{-tA}\mathbb{P}=(e^{-tA}\mathbb{P}_{jk})$ and $\nabla e^{-t A} \mathbb{P}=(\partial_{\ell} e^{-t A} \mathbb{P}_{j k})$, respectively. 

Recall the Fourier transform:
$$
\hat{\mathbf{v}}(\xi)=\int e^{-i x \cdot \xi} \mathbf{v}(x) \,\mathrm{d} x \quad(i=\sqrt{-1}).
$$
The Leray projection $\mathbb{P}$ is expressed componentwise as
$$
(\widehat{\mathbb{P} \mathbf{v}})_j(\xi)=\sum_{k=1}^n \hat{\mathbb{P}}_{j k}(\xi) \hat{\mathbf{v}}_k(\xi)=\sum_{k=1}^n\left(\delta_{j k}+\frac{i \xi_j i \xi_k}{|\xi|^2}\right) \hat{\mathbf{v}}_k(\xi) .
$$
The Fourier transform of the kernel function $K(x, t)=\left(K_{j k}(x, t)\right)$ of the operator $e^{-t A} \mathbb{P}=(e^{-t A} \mathbb{P}_{j k})$ can thus be expressed as
\begin{align} \label{2024.12.23.1}
\hat{K}_{j k}(\xi, t)=e^{-t|\xi|^2}\left(\delta_{j k}+\frac{i \xi_j i \xi_k}{|\xi|^2}\right) \equiv \hat{K}_{j k}^1+\hat{K}_{j k}^2 .
\end{align}

Applying the inverse Fourier transform, we obtain
\begin{align}
K_{j k}^1(x, t)=&(2\pi)^{-n}\int e^{i \xi \cdot x} e^{-t|\xi|^2} \,\mathrm{d} \xi\notag\\
\stackrel{\tilde{\xi}=\sqrt{t} \xi}{=}&(2\pi)^{-n} \int e^{i \tilde{\xi}\cdot \frac{x}{\sqrt{t}}} e^{-|\tilde{\xi}|^2} t^{-n/2}\,\mathrm{d} \tilde{\xi}\notag\\
=&C t^{-n/2} e^{-\frac{c|x|^2}{t}}. \label{2024.12.10.3}
\end{align}
Hence, 
\begin{align*}
\left|K_{j k}^1(x, t)\right| \leq C t^{-n/ 2} e^{-c|x|^2 / t} \leq C t^{-n/ 2} \text{ and } \left|K_{j k}^1(x, t)\right| \leq C |x|^{-n}.
\end{align*}
An interpolation argument yields, for all non-negative $\alpha$ and $\beta$ with $\alpha + \beta = n$, that
\begin{align} \label{24.12.10.1}
\left|K_{j k}^1(x, t)\right| \leq C|x|^{-\alpha} t^{-\beta / 2}.
\end{align}

To estimate $K_{j k}^2(x, t)$, from the identity $
|\xi|^{-2}=\int_0^{\infty} e^{-s|\xi|^2} \,\dd s$ we deduce that
$$
\hat{K}_{j k}^2(\xi, t)=\int_0^{\infty} i \xi_j i \xi_k e^{-(s+t)|\xi|^2} \,\dd s=\int_t^{\infty} i \xi_j i \xi_k e^{-s|\xi|^2} \,\dd s.
$$
Taking the inverse Fourier transform, we obtain
$$
K_{j k}^2(x, t)=\int_t^{\infty} \partial_{x_j} \partial_{x_k} E_s(x) \,\dd s,
$$
where $E_s(x)=(4 \pi s)^{-n / 2} e^{-|x|^2 / 4 s}$.  Since
$$
\left|\partial_{x_j} \partial_{x_k} E_s(x)\right| \leq C s^{-(n+3) / 2} e^{-c|x|^2 / s},
$$
we infer that
\begin{align*}
\left |K_{j k}^2(x, t)\right| &\leq C \int_t^{\infty} s^{-(n+3) / 2} e^{-c|x|^2 / s} \,\dd s \\
&\leq C \int_t^{\infty} s^{-(n+3) / 2} \,\dd s\\
&=C t^{-(n+1) / 2}.
\end{align*}
In addition, a change of the variable $\tau:=|x|^2 / s$ shows that
\begin{align*}
\left |K_{j k}^2(x, t)\right | &\leq C \int_t^{\infty} s^{-(n+3) / 2} e^{-c|x|^2 / s} \,\dd s\\
&=C |x|^{-(n+1)} \int_0^{|x|^2 / t} \tau^{(n-1) / 2} e^{-c \tau} \,\dd \tau \\
&\leq C|x|^{-(n+1)}.
\end{align*}
As a consequence,
\begin{align} \label{24.12.10.2}
\left |K_{j k}^2(x, t)\right| \leq C|x|^{-\alpha} t^{-\beta / 2} \qquad \text{ for all }\alpha, \beta \geq 0 \text{ with }\alpha + \beta = n+1.
\end{align}
Eq.~\eqref{2024.11.18.8} now follows from Eqs.~\eqref{24.12.10.1} and \eqref{24.12.10.2}.

For the proof of the $L^q$-estimates \eqref{2024.11.18.7}, thanks to $K_{j k}^1(x, t)=C t^{-n/2} e^{-\frac{c|x|^2}{t}}$, we have that
\begin{align*}
\left \| K_{j k}^1(\cdot ,t)\right \|_{L^q}^q  &= C t^{-qn/2} \int_{|x|\ge 1} e^{-\frac{cq|x|^2}{t}} \mathrm d x \\
&\le Ct^{-qn/2} \int_1^\infty e^{-\frac{cq r^2}{t}} r^{n-1} \mathrm d r\\
&\stackrel{\zeta=r\sqrt{c/t} }{\le} Ct^{-qn/2} \int_1^\infty e^{-q \zeta^2} \left(\sqrt{\frac{t}{c}}\right)^{n-1} \zeta^{n-1} \left(\sqrt{\frac{t}{c}}\right) \mathrm d \zeta\\
&\le Ct^{-(qn-n)/2} \int_1^\infty e^{-q \zeta^2} \zeta^{n-1} \mathrm d \zeta\\
&\le Ct^{-(qn-n)/2}.
\end{align*}
On the other hand, $\left |K_{j k}^2(x, t)\right | \leq C|x|^{-\alpha} t^{-\beta / 2} $ in \eqref{24.12.10.2} implies
\begin{align*}
\left \| K_{j k}^2(\cdot ,t)\right \|_{L^q}^q  &\le Ct^{-(qn-n)/2} \int_{|x|\ge 1} |x|^{-(n+q)} \mathrm d x \le Ct^{-(qn-n)/2}. 
\end{align*}
These finish the proof.
\end{proof}

\medskip

\noindent
{\bf Acknowledgement}. The research of SL is supported by NSFC Projects 12201399, 12331008, and 12411530065, Young Elite Scientists Sponsorship Program by CAST 2023QNRC001, the National Key Research $\&$ Development Program 2023YFA1010900 and 2024YFA1014900,  Shanghai Rising-Star Program 24QA2703600,  and the Shanghai Frontier Research Institute for Modern Analysis. The research of XS is partially supported by the National Key Research $\&$ Development Programs 2023YFA1010900  and 2024YFA1014900.

Both authors are indebted to Prof.~Feng Xie for insightful discussions. We also thank one anonymous referee for constructive remarks on an earlier version of the manuscript.

\medskip
\noindent
{\bf Competing Interests Statement}. We declare that there is no conflict of interests involved.

\end{document}